\newtheoremstyle{theorem}
  {10pt}		  
  {10pt}  
  {\sl}  
  {\parindent}     
  {\bf}  
  {. }    
  { }    
  {}     
\theoremstyle{theorem}
\newtheorem{theorem}{Theorem}
\newtheorem{corollary}[theorem]{Corollary}
\newtheorem{lemma}[theorem]{Lemma}
\newtheorem{prop}[theorem]{Proposition}
\newtheoremstyle{defi}
  {10pt}		  
  {10pt}  
  {\rm}  
  {\parindent}     
  {\bf}  
  {. }    
  { }    
  {}     
\theoremstyle{defi}
\title{A Lower Bound for the Size of a Sum of Dilates}
\author{\v Zeljka Ljuji\' c}
\address{Mathematics Ph.D. Program\\
The CUNY Graduate Center\\
365 Fifth Avenue\\
New York NY 10016}
\email{ zljujic@gc.cuny.edu}
\date{}
\begin{document}

\maketitle

\begin{abstract} Let $A$ be a subset of integers and let $2\cdot A+k\cdot A=\{2a_1+ka_2 : a_1,a_2\in A\}$. Y. O. Hamidoune and J. Ru\' e proved in \cite{HR} that if $k$ is an odd prime and $A$ a finite set of integers such that $|A|>8k^k$, then $|2\cdot A+k\cdot A|\ge (k+2)|A|-k^2-k+2$. In this paper, we extend this result for the case when $k$ is a power of an odd prime and the case when k is a product of two odd primes.
\end{abstract}

\section{Introduction}

Let $k$ be an integer and let $A$ be a finite set of integers. The $k$-dilation $k\cdot A$ of the set $A$ 
is the set  of all integers of the form $ka$, where $a\in A$. Let  $f(x_1,\ldots,x_n)=u_1x_1+\cdots+u_nx_n$ be a linear form with integer coefficients  $u_1,\ldots,u_n$. We define the set $f(A)=u_1\cdot A+\cdots+u_n\cdot A=\{u_1a_1+\cdots+u_ha_n: a_i\in A\}$. B. Bukh, in \cite{B} obtained the almost sharp lower bound for the size of the sets $f(A)$:
$|u_1\cdot A+\cdots u_n\cdot A|\ge (|u_1|+\cdots+ |u_n|)|A|-o(|A|)$, where $u_1,\ldots, u_n$ are integers such that $(u_1, \ldots, u_n)=1$. 

In the case of binary linear forms we write $f(x,y)=mx+ky$, where $m$ and $k$ are nonzero integers. We are interested in finding a sharp lower bound for $|f(A)|$. It is easy to see (\cite{MO}) that it is enough to consider only normalized binary linear forms satisfying $k\ge |m| \ge 1$ and $(m,k)=1$. Many authors (\cite{B},\cite{CHS},\cite{CSV},\cite{MN1}) studied the lower bounds of $|f(A)|$ for the case $m=1$. The sharp lower bound for $|A+k\cdot A|$ was known for the case $k=1$ (see \cite{MN}), and it was given for $k=2$ in \cite{MN1} and $k=3$ in \cite{CSV}. J. Cilleruelo, M. Silva, C. Vinuesa conjectured in  \cite{CSV} that if $k$ is a positive integer and $A$ a finite set of integers with sufficiently large cardinality, then $|A+k\cdot A|\ge (k+1)|A|-\lceil k(k+2)/4 \rceil$. This conjecture was proved for the case when $k$ is a prime number in \cite{CHS}, and very recently for the case when $k$ is a power of a prime and $k$ is a product of two primes in \cite{SHZ}. 

The case $m=2$ was studied in \cite{HR}. Y. O. Hamidoune and J. Ru\' e proved in \cite{HR} that if $k$ is an odd prime and $A$ a finite set of integers such that $|A|>8k^k$, then $|2\cdot A+k\cdot A|\ge (k+2)|A|-k^2-k+2$. In this paper, we extend this result for the case when $k$ is a prime power and a product of two primes. More precisely, we prove the following theorems.

\begin{theorem}\label{thm1} Let $A$ be a finite set of integers such that $|A|>8k^k$. If $k=p^\alpha$, where $p$ is an odd prime and $\alpha\in\mathbb{Z}_{\ge 1}$, then 
\[|2\cdot A+k\cdot A|\ge (k+2)|A|-k^2-k+2.
\]
\end{theorem}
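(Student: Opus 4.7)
The plan is to proceed by induction on the exponent $\alpha$, taking the base case $\alpha=1$ to be the Hamidoune--Ru\'e theorem stated in the introduction.

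For the inductive step, assume the inequality for exponent $\alpha-1$, set $k=p^\alpha$ with $\alpha\geq 2$, and fix $A\subseteq\mathbb{Z}$ with $|A|=n>8k^k$. The central idea is to exploit the divisibility $p\mid k$ by decomposing $A$ modulo $p$: write $A=\bigsqcup_{i=0}^{p-1}A_i$ where $A_i=\{a\in A:a\equiv i\pmod p\}$ and $n_i=|A_i|$. Because $k\cdot A\equiv 0\pmod p$ while $2\cdot A_i\equiv 2i\pmod p$, and because $p$ is odd (so the residues $2i\pmod p$ are distinct for $i=0,\ldots,p-1$), the sets $2\cdot A_i+k\cdot A$ lie in disjoint residue classes modulo $p$, giving
\[
|2\cdot A+k\cdot A|=\sum_{i=0}^{p-1}|2\cdot A_i+k\cdot A|.
\]
Writing $A_i=i+p\cdot A_i^*$ with $|A_i^*|=n_i$, a direct computation yields $|2\cdot A_i+k\cdot A|=|2\cdot A_i^*+p^{\alpha-1}\cdot A|$, so that the dilation exponent drops from $\alpha$ to $\alpha-1$ in each summand, at the price of introducing a two-set sumset.

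To bound $|2\cdot A_i^*+p^{\alpha-1}\cdot A|$ via the one-set inductive hypothesis, I would split into two regimes. If $A$ is concentrated in a single residue class modulo $p$, say $n_{i_0}\geq n-C(k)$ for a constant $C(k)$, then $A$ is essentially contained in the coset $i_0+p\mathbb{Z}$, and after translating and dividing by $p$ the problem reduces to the same statement with $\alpha$ replaced by $\alpha-1$ on a set of size nearly $n$, where the inductive hypothesis applies directly. Otherwise every $n_i$ is bounded away from $n$ and the maximum $n_{i_0}$ satisfies $n_{i_0}\geq n/p$; in this balanced regime I would combine the trivial sumset estimate $|2\cdot A_i^*+p^{\alpha-1}\cdot A|\geq n_i+n-1$ for the small classes with a stronger bound for the largest class, obtained by applying the inductive hypothesis to a superset $B\supseteq A_{i_0}^*\cup A$ and using Pl\"unnecke--Ruzsa type inequalities to transfer the resulting bound on $|2\cdot B+p^{\alpha-1}\cdot B|$ back to $|2\cdot A_{i_0}^*+p^{\alpha-1}\cdot A|$.

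The main obstacle is recovering the sharp constant $k^2+k-2=(k+2)(k-1)$. A naive iteration of the $p$-fold decomposition produces an error of order $p^{2\alpha-1}$ rather than $p^{2\alpha}$, so the constant degrades by a factor of $p$ at each layer of the recursion. To match the exact constant one needs a tight two-set analogue of Hamidoune--Ru\'e, together with careful telescoping of the constants across the $\alpha$ inductive layers. Establishing such a two-set estimate is the delicate technical heart of the argument; I expect it to hinge on a Freiman-type structure theorem for sets of small doubling applied to the extremal configurations, which the hypothesis $|A|>8k^k$ makes available.
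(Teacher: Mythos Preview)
Your induction on $\alpha$ with a mod-$p$ decomposition is genuinely different from the paper's route, but as written it is not a proof: you correctly reduce the inductive step to bounding the two-set sums $|2\cdot A_i^*+p^{\alpha-1}\cdot A|$, and then concede that this requires a ``tight two-set analogue of Hamidoune--Ru\'e'' which you do not supply. Invoking Pl\"unnecke--Ruzsa or a Freiman-type structure theorem at this point is hand-waving: those tools carry multiplicative losses that are incompatible with recovering the sharp additive constant $k^2+k-2$, and you give no concrete mechanism for telescoping the errors across the $\alpha$ layers. The ``delicate technical heart'' of your argument is simply absent. (Incidentally, your remark that the naive error is of order $p^{2\alpha-1}$ rather than $p^{2\alpha}$ would make the naive bound \emph{stronger} than required; the real obstruction is that without the two-set estimate you never secure the correct main term $(k+2)|A|$ in the first place.)

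The paper proceeds quite differently. It decomposes $A$ modulo $k$ (not modulo $p$) into classes $A_i=kX_i+u_i$, and the prime-power hypothesis enters only through the observation that if $p\mid u_i$ and $p\nmid u_m$ then $(u_i-u_m,k)=1$, so Chowla's theorem applies to translates of $2\hat X_i$ in $\mathbb{Z}/k\mathbb{Z}$ (Lemma~\ref{imp}). The key auxiliary is a universal weak bound $|2\cdot A+k\cdot A|\ge(k+2)|A|-4k^{k-1}$, valid for \emph{all} finite $A$ with no size hypothesis (Lemma~\ref{da}), proved by an internal induction on an integer parameter $t$ rather than on $\alpha$. The hypothesis $|A|>8k^k$ is used only at the very end: it guarantees $|A_1|>8k^{k-1}$, which is enough to absorb the $4k^{k-1}$ error terms arising when Lemma~\ref{da} is applied to the complementary classes. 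No Pl\"unnecke inequality or Freiman structure theorem appears anywhere.
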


\begin{theorem}\label{thm2} Let $A$ be a finite set of integers such that $|A|>8k^k$. If $k=pq$, where $p$ and $q$ are distinct odd primes, then 
\[|2\cdot A+k\cdot A|\ge (k+2)|A|-k^2-k+2.
\]
\end{theorem}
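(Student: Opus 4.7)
My plan is to reduce the composite case $k=pq$ to the prime-$q$ result of Hamidoune--Ru\'e by decomposing $A$ modulo the smaller prime factor $p$. Since $k\equiv 0\pmod p$ and $\gcd(2,p)=1$, if we write $A = \bigsqcup_{i\in I} A_i$ with $A_i=\{a\in A : a\equiv i\pmod p\}$ and set $B_i = (A_i-i)/p$ (so $|B_i|=|A_i|$), then the pieces $2\cdot A_i + k\cdot A$ lie in distinct residue classes modulo $p$. Because $2\cdot A_i + k\cdot A = 2i + p\cdot(2\cdot B_i + q\cdot A)$, one obtains the clean identity
\[
|2\cdot A + k\cdot A| \;=\; \sum_{i\in I} |2\cdot B_i + q\cdot A|.
\]

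The key technical ingredient I would prove is a two-set analogue of Hamidoune--Ru\'e: for nonempty finite $B\subset\mathbb{Z}$ and finite $A\subset\mathbb{Z}$ with $|A|>8q^q$,
\[
|2\cdot B + q\cdot A| \;\geq\; q|A| + 2|B| - (q^2+q-2).
\]
With $B=A$ this recovers the original theorem. A direct way to attack this is to decompose $B$ modulo $q$ as $B^{(j)} = j+qC_j$, so that $|2\cdot B + q\cdot A| = \sum_j |2\cdot C_j+A|$, and then use the Freiman inequality $|2\cdot C_j+A|\geq 2|C_j|+|A|-1$. This suffices when $B$ meets every residue modulo $q$; the degenerate case where $B$ occupies only a few residue classes modulo $q$ is handled by the rectification arguments already present in \cite{HR}, which use the hypothesis $|A|>8q^q$ to force $A$ into a controlled arithmetic progression and then to compute the sumset directly. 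Summing the lemma over $i\in I$, in the favourable case $|I|=p$, yields
\[
|2\cdot A + k\cdot A| \;\geq\; pq|A|+2|A|-p(q^2+q-2) \;=\; (k+2)|A|-pq^2-pq+2p,
\]
which is strictly stronger than the target $(k+2)|A|-k^2-k+2$ since $p,q\geq 3$.

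The main obstacle is the residual case when $|I|<p$ or some $|B_i|$ is too small for the two-set lemma to deliver the full $q|A|$ of contribution (for instance, a singleton $B_i$ yields only $|A|$). When $A$ misses residues modulo $p$, one must exploit the extra affine structure: I would separate the small layers (using only the trivial bound $|2\cdot B_i+q\cdot A|\geq |A|$ there) from the large ones, and argue that the total mass $\sum |B_i|$ sitting on the small layers is small enough that the contributions from the large layers still exceed the target. A parallel argument handles those $B_i$ which are concentrated modulo $q$, where the two-set lemma itself is degenerate. This careful case analysis, combined with a proof of the two-set lemma that is uniform in $|B|$, is the heart of the argument and is exactly where the composite case $k=pq$ demands genuinely more work than the prime case handled in \cite{HR}.
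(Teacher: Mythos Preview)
Your decomposition identity $|2\cdot A+k\cdot A|=\sum_{i\in I}|2\cdot B_i+q\cdot A|$ is correct, but the rest of the plan does not go through, and the approach is quite different from the paper's (which decomposes modulo $k$, not modulo $p$, and runs an induction on an auxiliary parameter $t$ together with a delicate case analysis of the quantities $\Delta_{ii}$).

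There are two concrete gaps. First, the ``two--set lemma'' $|2\cdot B+q\cdot A|\ge q|A|+2|B|-(q^2+q-2)$ is simply false in the generality you need: for $|B|=1$ it would force $(q-1)|A|\le q^2+q-4$, contradicting $|A|>8q^q$. You acknowledge this, but your fallback bound $|2\cdot B_i+q\cdot A|\ge |A|$ for a thin layer loses roughly $(q-1)|A|$ against the target, which is far larger than the allowed deficit $k^2+k-2$; there is no mechanism in your sketch that recovers a loss of order $|A|$. Relatedly, the step ``use the Freiman inequality $|2\cdot C_j+A|\ge 2|C_j|+|A|-1$'' is not a valid general inequality: the elementary bound gives only $|C_j|+|A|-1$, and the factor $2$ requires $c_2(A)=2$, which need not hold (e.g.\ $A$ could consist entirely of odd integers while still having $\gcd(A)=1$).

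Second, the case $|I|<p$ is not merely a residual nuisance but can be the entire problem. If $A$ lies in a single residue class modulo $p$ (say $A=a_0+p\cdot A'$, which is compatible with $\gcd(A)=1$), then $|I|=1$ and your identity yields $|2\cdot A+k\cdot A|=|2\cdot A'+k\cdot A'|$: the reduction is circular and produces no progress toward the bound $(pq+2)|A|$. Your proposal to ``argue that the total mass on small layers is small'' cannot apply here, since all of the mass sits on a single layer. Handling these degenerate distributions modulo $p$ and modulo $q$ simultaneously is exactly the difficulty the paper addresses by working modulo $k$ and invoking Lemmas~\ref{l6} and~\ref{l8} to control the structure of $2\cdot\hat X_i$; your outline does not supply a substitute for this.
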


\section{Notation and Preliminaries}

Let $A$ be a finite set of integers and let $k$ be a positive integer. We define $\hat{A}$ to be the natural projection of the set $A$ on $\mathbb{Z}/k\mathbb{Z}$ and $c_k(A)=|\hat{A}|$. Then, if $c_k(A)=j$, we denote by $A_1$, $A_2, \ldots, A_j$ the distinct congruences classes of $A$ modulo $k$. We assume that $|A_1|\ge |A_2|\ge\ldots\ge|A_j|$. For every $1\le i\le j$, we write $A_i=kX_i+u_i$, where $0\le u_i<k$. Let $E=\{1\le i\le j\mid |X_i|<k\}$ and let $F=\{1\le i\le j\mid |X_i|=k\}$. We define the sets $\Delta_{ii}=(2A_i+k\cdot A)\setminus (2A_i+k\cdot A_i)$ for $1\le i\le j$.

\begin{lemma}[Chowla, \cite{MN}] \label{C} Let $n\ge 2$ and let $A$ and $B$ be nonempty subsets of $\mathbb{Z}/n\mathbb{Z}$. If $0\in B$ and $(b,n)=1$ for all $b\in B\setminus\{0\}$, then
\[|A+B|\ge \min\{n, |A|+|B|-1\}.
\]
\end{lemma}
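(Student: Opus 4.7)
The plan is to prove the lemma by induction on $|B|$, using a Dyson-type $e$-transform and calling on the coprimality hypothesis at the one step where the group structure must be activated.

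For the base case $|B| = 1$, the hypothesis $0 \in B$ forces $B = \{0\}$, so $A + B = A$ and the inequality $|A+B| \ge |A| + |B| - 1$ is immediate.

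For the inductive step $|B| \ge 2$, I would first dispose of the trivial case $A = \mathbb{Z}/n\mathbb{Z}$, in which $|A+B| = n$. Otherwise, the key point is to produce some $e \in A$ with $B + e \not\subseteq A$. If no such $e$ exists, then $A + B \subseteq A$; combined with $A \subseteq A + B$ (which holds because $0 \in B$), this gives $A + B = A$, and hence $A + b = A$ for every $b \in B \setminus \{0\}$. Since $(b,n) = 1$, such a $b$ generates $\mathbb{Z}/n\mathbb{Z}$, so $A$ is invariant under addition by every element of $\mathbb{Z}/n\mathbb{Z}$; being nonempty this forces $A = \mathbb{Z}/n\mathbb{Z}$, a contradiction. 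Fixing such an $e$, I would form the Dyson pair
\[
A' = A \cup (B + e), \qquad B' = B \cap (A - e),
\]
and verify the three standard facts: $A' + B' \subseteq A + B$, the equality $|A'| + |B'| = |A| + |B|$ (using $|A \cap (B + e)| = |(A - e) \cap B|$), and $|B'| < |B|$ (by the choice of $e$). Since $0 \in B'$ (because $0 \in B$ and $e \in A$) and $B' \subseteq B$, the pair $(A', B')$ still satisfies the hypotheses of the lemma, and the inductive hypothesis yields
\[
|A + B| \ge |A' + B'| \ge \min\{n, |A'| + |B'| - 1\} = \min\{n, |A| + |B| - 1\}.
\]

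The only genuinely delicate point is the production of the element $e$: this is where the coprimality hypothesis is indispensable, and it is precisely the mechanism that lets the induction succeed beyond prime modulus, unlike in the classical Cauchy--Davenport setting where an analogous argument breaks down for composite $n$.
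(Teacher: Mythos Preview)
Your proof is correct. The paper does not give its own proof of this lemma: it is simply quoted as Chowla's theorem with a citation to Nathanson's textbook \cite{MN}. The argument you give---induction on $|B|$ via the Dyson $e$-transform, with the coprimality hypothesis used exactly once to show that $A+B=A$ forces $A=\mathbb{Z}/n\mathbb{Z}$---is in fact the standard proof, and is essentially the one found in that reference.
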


The following proposition, as well as its corollaries and the following lemma are Proposition 3.2, Corollary 3.3, Corollary 3.4 and Lemma 4.1 from \cite{HR}.

\begin{prop}\label{l} Let $A$ and $B$ be finite set of integers and let $n$ and $m$ be coprime integer. Then
\[|n\cdot A+m\cdot B|\ge c_n(B)|A|+c_m(A)|B|-c_m(A)c_n(B).
\]
\end{prop}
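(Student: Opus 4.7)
The plan is to partition $A$ and $B$ by their residue classes modulo $m$ and $n$ respectively, and then use the Chinese Remainder Theorem to argue that the contributions to $n\cdot A+m\cdot B$ coming from different pairs of classes are pairwise disjoint. Once disjointness is established, a single application of the elementary sumset bound $|X+Y|\ge|X|+|Y|-1$ on each piece produces the claimed inequality after summation.

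More concretely, I would write $A$ as a disjoint union $A=A_1\sqcup\cdots\sqcup A_s$ of its residue classes modulo $m$ (so $s=c_m(A)$), and $B=B_1\sqcup\cdots\sqcup B_t$ modulo $n$ (so $t=c_n(B)$). Choosing representatives, express $A_i=mX_i+r_i$ with $0\le r_i<m$ and $B_j=nY_j+s_j$ with $0\le s_j<n$, where $X_i,Y_j\subset\mathbb{Z}$ satisfy $|X_i|=|A_i|$, $|Y_j|=|B_j|$. Then
\[n\cdot A_i+m\cdot B_j=mn(X_i+Y_j)+(nr_i+ms_j),\]
and any element $na+mb$ of this set is congruent to $nr_i\pmod m$ and to $ms_j\pmod n$. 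Because $\gcd(m,n)=1$, the Chinese Remainder Theorem guarantees that the residues $nr_i+ms_j\pmod{mn}$ are all distinct as $(i,j)$ ranges over $\{1,\dots,s\}\times\{1,\dots,t\}$, so the translated pieces $n\cdot A_i+m\cdot B_j$ live in distinct cosets of $mn\mathbb{Z}$ and are therefore pairwise disjoint.

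With disjointness in hand, the counting is immediate:
\[|n\cdot A+m\cdot B|=\sum_{i=1}^{s}\sum_{j=1}^{t}|X_i+Y_j|\ge\sum_{i=1}^{s}\sum_{j=1}^{t}\bigl(|A_i|+|B_j|-1\bigr),\]
using $|X+Y|\ge|X|+|Y|-1$ for finite sets of integers. Expanding the right-hand side yields $t|A|+s|B|-st=c_n(B)|A|+c_m(A)|B|-c_m(A)c_n(B)$, which is the desired bound. There is really no obstacle in this argument; the only substantive step is the CRT observation that forces the class-by-class sumsets to live in different residues modulo $mn$, after which everything reduces to the trivial lower bound for a sumset in $\mathbb{Z}$.
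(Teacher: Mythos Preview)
Your argument is correct; this is exactly the standard CRT decomposition proof. The paper itself does not prove this proposition but simply quotes it as Proposition~3.2 of \cite{HR}, where the same partition-by-residues and $|X+Y|\ge|X|+|Y|-1$ computation is carried out.
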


\begin{corollary}\label{cor1} Let $2\le n<m$ be coprime integers. Let $A$ be a finite set of integers. Then $|n\cdot A+m\cdot B|\ge4|A|-4$.
\end{corollary}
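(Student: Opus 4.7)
The plan is to combine an affine normalization of $A$ with a single application of Proposition~\ref{l}. First, observe that the quantity $|n\cdot A + m\cdot A|$ is unchanged by replacing $A$ with $A-a_0$ for any $a_0\in\mathbb{Z}$ (this simply translates the sumset by $(n+m)a_0$), or with $\lambda\cdot A$ for any nonzero rational $\lambda$ such that $\lambda\cdot A\subset\mathbb{Z}$ (this merely scales the sumset by $\lambda$). Assuming the nontrivial case $|A|\ge 2$, we may translate so that $0\in A$ and then divide out by $\gcd(A)$ so that $\gcd(A)=1$. Under this normalization, if $c_n(A)=1$ then every element of $A$ would be divisible by $n\ge 2$, contradicting $\gcd(A)=1$; hence $c_n(A)\ge 2$, and similarly $c_m(A)\ge 2$.

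Next, applying Proposition~\ref{l} with $B=A$ gives
\[
|n\cdot A + m\cdot A| \;\ge\; c_n(A)\,|A| + c_m(A)\,|A| - c_n(A)\,c_m(A).
\]
Writing $\alpha=c_n(A)$ and $\beta=c_m(A)$ and assuming without loss of generality that $2\le\alpha\le\beta$, the target bound $|n\cdot A + m\cdot A|\ge 4|A|-4$ reduces to the purely arithmetic inequality
\[
(\alpha+\beta-4)\,|A| \;\ge\; \alpha\beta-4.
\]
Since $|A|\ge\beta$ and $\alpha+\beta-4\ge 0$, the left-hand side is at least $(\alpha+\beta-4)\beta = \alpha\beta + (\beta-2)^2 - 4\ge \alpha\beta-4$, which closes the argument.

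The only real obstruction is the situation where $c_n(A)=1$ or $c_m(A)=1$: in that regime Proposition~\ref{l} by itself yields at most $2|A|-1$, far weaker than the required $4|A|-4$. The affine normalization performed at the outset is exactly what rules this regime out, and once both residue counts are at least $2$ the remainder of the proof is a one-line estimate.
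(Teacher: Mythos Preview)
Your proof is correct. The paper does not prove this corollary itself but imports it verbatim as Corollary~3.3 of \cite{HR}; your argument---normalize so that $0\in A$ and $\gcd(A)=1$, deduce $c_n(A),c_m(A)\ge 2$, and then apply Proposition~\ref{l} with $B=A$---is exactly the intended derivation of the corollary from the proposition, and the arithmetic check $(\alpha+\beta-4)\beta\ge\alpha\beta-4$ is clean.
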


\begin{corollary}\label{cor2} Let $k$ be an odd integer. Let $A$ be a finite set of integers such that $c_k(A)=k$. Then $|2\cdot A+k\cdot A|\ge(k+2)|A|-2k$.
\end{corollary}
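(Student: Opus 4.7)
The plan is to invoke Proposition~\ref{l} with $(n,m)=(2,k)$ and $B=A$, which is legitimate since $k$ is odd, hence coprime to~$2$. Using $c_k(A)=k$, the proposition will give
\[
|2\cdot A+k\cdot A|\;\ge\;c_2(A)\,|A|+k\,|A|-k\,c_2(A)\;=\;\bigl(c_2(A)+k\bigr)|A|-k\,c_2(A).
\]
When $c_2(A)=2$, i.e.\ when $A$ contains integers of both parities, the right-hand side equals $(k+2)|A|-2k$, and the corollary follows immediately.

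The only remaining case is $c_2(A)=1$, in which every element of $A$ shares the same parity. Here I would reduce to the previous case by an affine normalization. Fix any $a_0\in A$, let $d$ be the largest power of~$2$ that divides every element of $A-a_0$, and set $A':=(A-a_0)/d\subset\mathbb{Z}$. The identity
\[
2\cdot A+k\cdot A\;=\;(k+2)\,a_0+d\,\bigl(2\cdot A'+k\cdot A'\bigr)
\]
shows that $|2\cdot A+k\cdot A|=|2\cdot A'+k\cdot A'|$, and clearly $|A'|=|A|$. Since $\gcd(d,k)=1$, multiplication by $d$ is a bijection on $\mathbb{Z}/k\mathbb{Z}$, so $c_k(A')=c_k(A)=k$. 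Finally, the maximality of $d$ rules out $A'$ being entirely even, while $0\in A'$ supplies an even element, so $c_2(A')=2$. Applying the first paragraph to $A'$ then delivers $|2\cdot A+k\cdot A|\ge(k+2)|A|-2k$, as required.

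The substantive step is the one-line consequence of Proposition~\ref{l}; no serious obstacle is expected, only the bookkeeping in the affine reduction that upgrades $c_2(A)=1$ to $c_2(A')=2$ while preserving $c_k$.
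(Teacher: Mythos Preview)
Your argument is correct. The paper does not supply its own proof of Corollary~\ref{cor2} but imports it from~\cite{HR} as a corollary of Proposition~\ref{l}; your derivation---applying that proposition with $(n,m)=(2,k)$ and then reducing the degenerate case $c_2(A)=1$ to $c_2(A')=2$ via an affine rescaling---is exactly the intended route.
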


\begin{lemma} \label{graph}Let $A$ be a finite set of integers and let $k$ be a positive integer. Then 
\[\sum_{i=1}^j \Delta_{ii}\ge j(j-1).
\]
\end{lemma}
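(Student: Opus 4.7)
The plan is to prove the stronger pointwise inequality
$$|\Delta_{ii}| \ge |\{\ell \ne i : M_\ell > M_i\}| + |\{\ell \ne i : m_\ell < m_i\}|,$$
where $M_i := \max A_i$ and $m_i := \min A_i$, and then to obtain the lemma by summing over $i$ and exploiting that the $M_\ell$ (respectively $m_\ell$) are pairwise distinct.

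The key step is a witness construction. I claim that if $\ell \ne i$ and $M_\ell > M_i$, then $2M_i + k M_\ell$ lies in $\Delta_{ii}$. Indeed, if on the contrary $2M_i + k M_\ell = 2a' + k a''$ for some $a', a'' \in A_i$, then since $a'$ and $M_i$ lie in the same residue class modulo $k$ and $a' \le M_i$, we may write $M_i - a' = ks$ with an integer $s \ge 0$, and the relation yields $a'' - M_\ell = 2s \ge 0$. Hence $a'' \ge M_\ell > M_i$, contradicting $a'' \in A_i$. By the symmetric argument with minima, if $m_\ell < m_i$ then $2m_i + k m_\ell \in \Delta_{ii}$.

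Next I need to check that, for fixed $i$, all the witnesses produced by the two claims are pairwise distinct. Distinctness within either family is automatic because the $M_\ell$ (resp.\ $m_\ell$) are distinct integers, lying as they do in distinct residue classes modulo $k$. For distinctness across the two families, when $|A_i| > 1$ the first family consists of values strictly greater than $2M_i + k M_i$ while the second consists of values strictly less than $2m_i + k m_i$, and these two ranges are disjoint. When $|A_i| = 1$, so that $M_i = m_i$, an equality $2M_i + k M_\ell = 2m_i + k m_{\ell'}$ reduces to $M_\ell = m_{\ell'}$; reducing modulo $k$ forces $u_\ell = u_{\ell'}$ and hence $\ell = \ell'$, so $A_\ell$ would be a singleton whose sole element must simultaneously exceed $M_i$ and be smaller than $m_i = M_i$, a contradiction.

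Summing the pointwise bound over $i$, the two double sums count ordered pairs $(i,\ell)$ with $M_\ell > M_i$ and with $m_\ell < m_i$ respectively; by distinctness of the $M_\ell$ and the $m_\ell$, each unordered pair $\{i,\ell\}$ contributes exactly once to each sum, so each sum equals $\binom{j}{2}$ and the total is $j(j-1)$. I expect the only mildly subtle point to be the cross-family distinctness check in the degenerate case $|A_i|=1$, which is handled by the one-line residue argument above; otherwise the proof is elementary and, interestingly, requires no assumption on $k$ beyond positivity.
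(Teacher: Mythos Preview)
Your proof is correct. The paper itself does not prove this lemma but quotes it as Lemma~4.1 of Hamidoune--Ru\'e~\cite{HR}; your max/min witness construction is essentially the standard argument behind that result, and the double count over ordered pairs $(i,\ell)$ recovers exactly the $j(j-1)$ bound. (Incidentally, your separate treatment of the case $|A_i|=1$ is harmless but unnecessary: even there the ``max'' witnesses are strictly above $(k+2)M_i$ and the ``min'' witnesses strictly below $(k+2)m_i=(k+2)M_i$, so the two families remain disjoint.)
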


In the proof of Theorem \ref{thm2}, we will use the following lemmas. They appear as Lemma 6 and Lemma 8 in \cite{SHZ}.

\begin{lemma}\label{l6} Let $k$ be a positive integer and let $A$ be a nonempty subset of $\mathbb{Z}/k\mathbb{Z}$. Let $\alpha$ be a nonzero element in $\mathbb{Z}/k\mathbb{Z}$. We have $A+\alpha=A$ if and only if
\[A=\bigcup_{\beta\in I} ((k,\alpha)\cdot\{0,1,\ldots,\frac{k}{(k,\alpha)}-1\}+\beta)
\]
for some nonempty set $I\subset\mathbb{Z}/(k,\alpha)\mathbb{Z}$ and $\frac{k}{(k,\alpha)}\mid|A|$.
\end{lemma}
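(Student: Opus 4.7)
The plan is to recognize the set $S := (k,\alpha)\cdot\{0,1,\ldots,k/(k,\alpha)-1\}$ as the cyclic subgroup $H := \langle \alpha \rangle$ of $\mathbb{Z}/k\mathbb{Z}$ generated by $\alpha$; once this identification is made, the lemma is essentially the standard fact that a subset of an abelian group is invariant under translation by $\alpha$ if and only if it is a union of cosets of $\langle \alpha \rangle$. Writing $d = (k,\alpha)$, I would first note that Bezout gives $d \in \langle \alpha \rangle$, while conversely every multiple of $\alpha$ is a multiple of $d$; hence $\langle \alpha \rangle = \langle d \rangle = \{0, d, 2d, \ldots, (k/d-1)d\} = S$, of cardinality $k/d$.

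For the ``if'' direction, if $A = \bigcup_{\beta \in I}(H + \beta)$ then $A + \alpha = \bigcup_{\beta \in I}(H + \alpha + \beta) = \bigcup_{\beta \in I}(H + \beta) = A$, since $\alpha \in H$. For the converse, iterating $A + \alpha = A$ yields $A + j\alpha = A$ for every $j \in \mathbb{Z}$, and therefore $A + h = A$ for every $h \in H$. Consequently, whenever $\beta \in A$ the entire coset $H + \beta$ lies in $A$, so $A$ is the union of the cosets of $H$ that it meets. Picking one representative per coset and using the natural bijection $(\mathbb{Z}/k\mathbb{Z})/H \cong \mathbb{Z}/d\mathbb{Z}$, I obtain an index set $I \subset \mathbb{Z}/d\mathbb{Z}$ producing the claimed decomposition. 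The divisibility $(k/d) \mid |A|$ is then immediate from $|A| = |I| \cdot |H| = |I| \cdot k/d$, since distinct cosets are disjoint.

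There is essentially no obstacle here: the statement is a reformulation of elementary facts about the cyclic subgroups of $\mathbb{Z}/k\mathbb{Z}$. The only point needing mild care is consistency in how one indexes cosets, namely viewing $I$ as a subset of $\mathbb{Z}/d\mathbb{Z}$ (equivalently, a set of representatives in $\mathbb{Z}/k\mathbb{Z}$ no two of which are congruent modulo $d$) so that the union in the decomposition of $A$ is disjoint.
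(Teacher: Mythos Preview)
Your proof is correct and entirely standard: the identification $\langle\alpha\rangle=\langle (k,\alpha)\rangle$ via B\'ezout, followed by the observation that $A+\alpha=A$ forces $A$ to be a union of cosets of $\langle\alpha\rangle$, is exactly the right argument, and the divisibility statement follows immediately from the disjointness of distinct cosets.

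There is nothing to compare against here: the paper does not prove this lemma at all. It is quoted (together with the subsequent Lemma~\ref{l8}) from \cite{SHZ}, where it appears as Lemma~6, and is simply stated without proof in the present paper. Your write-up would serve perfectly well as a self-contained justification.
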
 

\begin{lemma}\label{l8} Let $k>2$ be an integer that is not a prime and let $A$ be a nonempty subset of $\mathbb{Z}/k\mathbb{Z}$. Let $(q,k)\ne 1$ and $0\in B\subset(\{0,\bar{q}\}\cup\{\bar{b}\mid(b,k)=1\})$. If $|A+\{0,\bar{q}\}|\ge |A|+1$, then
\[|A+B|\ge \min(k,|A|+|B|-1).
\]
\end{lemma}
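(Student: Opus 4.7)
The plan is a case split on whether $\bar q\in B$. If $\bar q\notin B$, every nonzero element of $B$ is coprime to $k$ and Lemma~\ref{C} gives the conclusion directly. Assume henceforth $\bar q\in B$ and write $B=\{0,\bar q\}\cup B_0$ with $B_0\subseteq\{\bar b:(b,k)=1\}$; when $B_0=\emptyset$ the conclusion is precisely the hypothesis, so assume also $B_0\ne\emptyset$.

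I would argue by contradiction. Suppose $|A+B|<\min(k,|A|+|B|-1)$. Then Kneser's theorem produces a proper nontrivial subgroup $H=d\mathbb{Z}/k\mathbb{Z}$ (with $1<d<k$) stabilizing $A+B$ and satisfying
\[|A+B|\ge|A+H|+|B+H|-|H|.\]
Setting $a=|A+H|-|A|\ge 0$ and $b=|B+H|-|B|\ge 0$, this forces $a+b\le|H|-2$. The key structural input is that every unit of $\mathbb{Z}/k\mathbb{Z}$ projects (under $\pi\colon\mathbb{Z}/k\mathbb{Z}\to\mathbb{Z}/d\mathbb{Z}$) to a unit of $\mathbb{Z}/d\mathbb{Z}$, so $\pi(B_0)$ misses $\bar 0$, while $\pi(\bar q)=\bar 0$ iff $\bar q\in H$.

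Then I split on whether $\bar q\in H$. If $\bar q\in H$, a coset count using $|B_0|\le m|H|$ (where $m=|\pi(B_0)|$) yields $b\ge|H|-2$, hence $a=0$ and $A+H=A$; since $\bar q\in H$ this gives $A+\bar q=A$, contradicting $|A+\{0,\bar q\}|\ge|A|+1$. If $\bar q\notin H$, either $\pi(\bar q)\notin\pi(B_0)$ (then $b\ge 2|H|-2$, absurd) or $\pi(\bar q)\in\pi(B_0)$, which forces $B_0+H=B_0$ and hence $\bar q\in\bar q+H\subseteq B_0$, contradicting $(q,k)\ne 1$.

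The main obstacle is the coset book-keeping that converts the Kneser inequality into exact $H$-periodicity of $A$ (or of $B_0$): once the bound $b\ge|H|-2$ is correctly extracted from enumerating the $H$-cosets hit by $\{0,\bar q\}\cup B_0$, the deficiency $a$ is driven to $0$ and each of the three sub-cases closes with an immediate contradiction.
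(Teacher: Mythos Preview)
The paper does not give its own proof of this lemma; it is quoted verbatim from \cite{SHZ} (as Lemma~8 there), so there is no in-paper argument to compare against.

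Your Kneser-based argument is correct. The initial reductions (Chowla when $\bar q\notin B$; the hypothesis itself when $B=\{0,\bar q\}$) are clean. In the main case, Kneser indeed forces a proper nontrivial stabilizer $H=d\mathbb{Z}/k\mathbb{Z}$ and the inequality $a+b\le|H|-2$. The structural fact you isolate---units of $\mathbb{Z}/k\mathbb{Z}$ project to units of $\mathbb{Z}/d\mathbb{Z}$, so $\pi(B_0)$ avoids $\bar 0$---is exactly what makes the coset count work. In the subcase $\bar q\in H$ one has $|B+H|=(m+1)|H|$ while $|B|\le 2+m|H|$, giving $b\ge|H|-2$, hence $a=0$, hence $A$ is $H$-periodic and $A+\bar q=A$, contradicting the hypothesis. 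In the subcase $\bar q\notin H$ and $\pi(\bar q)\notin\pi(B_0)$ one gets $b\ge 2|H|-2>|H|-2$, impossible. Finally, if $\pi(\bar q)\in\pi(B_0)$ then $b\le|H|-2$ together with $|B+H|=(m+1)|H|$ and $|B|=2+|B_0|$ forces $|B_0|=m|H|$, i.e.\ $B_0$ is $H$-periodic; since the coset $\bar q+H$ meets $B_0$ it lies in $B_0$, putting the non-unit $\bar q$ into $B_0$, a contradiction. All three closures are sound.

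The only cosmetic point: you should remark that $\bar q\ne\bar 0$ (immediate from $|A+\{0,\bar q\}|\ge|A|+1$), so that $|\{0,\bar q\}|=2$ and the disjointness claims in the coset counts are valid.
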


\section{The case $k=p^{\alpha}$}

\begin{lemma}\label{2-full} Let $A$ be a finite set of integers such that $\gcd(A)=1$ and $0\in A$. Let $k=p^\alpha$, where $p$ is an odd prime number and $\alpha\in\mathbb{Z}_{\ge 1}$. If $|\Delta_{ii}|<|A_i|$, then $c_2(A_i)=2$.
\end{lemma}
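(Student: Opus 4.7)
My plan is to prove the contrapositive: assuming $c_2(A_i)=1$, I will show $|\Delta_{ii}|\ge |A_i|$. Concretely, suppose $A_i$ lies entirely in a single residue class modulo $2$, so every element of $A_i$ has some fixed parity $\epsilon\in\{0,1\}$.

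The key observation is that $k=p^\alpha$ with $p$ an odd prime forces $k$ to be odd, so multiplication by $k$ preserves parity. Hence $k\cdot A_i$ lies in the parity class $\epsilon$, while $2\cdot A_i$ is entirely even; consequently every element of $2\cdot A_i+k\cdot A_i$ has parity $\epsilon\pmod 2$.

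Next I would use $0\in A$ and $\gcd(A)=1$ to produce an element of $A$ of the opposite parity. Since $0$ is even, if every element of $A$ were even then $\gcd(A)$ would be even, contradicting $\gcd(A)=1$; hence some $a_0\in A$ has parity $1-\epsilon$. Because $k$ is odd, $k\cdot a_0$ also has parity $1-\epsilon$, so every element of the translate $2\cdot A_i+k\cdot a_0$ has parity $1-\epsilon$, opposite to that of $2\cdot A_i+k\cdot A_i$.

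Therefore $2\cdot A_i+k\cdot a_0$ is a subset of $2\cdot A_i+k\cdot A$ that is disjoint from $2\cdot A_i+k\cdot A_i$, so it is contained in $\Delta_{ii}$. As a translate of $2\cdot A_i$ it has exactly $|A_i|$ elements, yielding $|\Delta_{ii}|\ge |A_i|$ and contradicting the hypothesis. There is no real obstacle here; the argument is a short parity computation, and in fact uses only that $k$ is odd rather than the full prime-power hypothesis, suggesting that the prime-power structure of $k$ will be needed only in the lemmas that bound $|\Delta_{ii}|$ from above in the subsequent parts of the paper.
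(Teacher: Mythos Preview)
Your proposal is correct and follows essentially the same approach as the paper: both argue the contrapositive via a parity argument, using $\gcd(A)=1$ to find an odd element (when $A_i$ is even) and $0\in A$ to supply an even element (when $A_i$ is odd), then observing that the translate $2\cdot A_i+k a_0$ is disjoint from $2\cdot A_i+k\cdot A_i$. The only cosmetic difference is that the paper splits explicitly into the two parity cases for $A_i$, whereas you treat them uniformly; your remark that only the oddness of $k$ is used is also accurate.
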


\begin{proof} Let us assume that $c_2(A_i)=1$. Thus, $A_i$ contains only even or only odd integers. 

Let $A_i$ contains only even integers. There exists an odd $a\in A$, since $\gcd(A)=1$. Then
\[|\Delta_{ii}|= |(2\cdot A_i+k\cdot A)\setminus(2\cdot A_i+k\cdot A_i)|\ge|(2\cdot A_i+ka)|=|A_i|,
\]
a contradiction.

Similarly, if $A_i$ contains only odd integers
\[|\Delta_{ii}|= |(2\cdot A_i+k\cdot A)\setminus(2\cdot A_i+k\cdot A_i)|\ge |(2\cdot A_i+k0)|=|A_i|,
\]
a contradiction.
\end{proof}

\begin{lemma} \label{imp} Let $A$ be a finite set of integers such that $\gcd(A)=1$. Let $k=p^\alpha$, where $p$ is an odd prime number and $\alpha\in\mathbb{Z}_{\ge 1}$. Let $m=\min\{1\le i\le j\mid p\nmid u_i\}$ and $i\in E\setminus \{m\}$. 
\begin{enumerate} 
\item[(i)] If $p\mid u_i$, then $|\Delta_{ii}|\ge |A_m|$.
\item[(ii)] If $u_l=0$ and $p\nmid u_i$, then $|\Delta_{ii}|\ge |A_l|$.
\end{enumerate} 
\end{lemma}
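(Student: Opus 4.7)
The plan is to prove (i) and (ii) in parallel by producing, in each case, a single element $a_0 \in A_i$ for which $(2a_0 + k \cdot A_l) \cap (2 \cdot A_i + k \cdot A_i) = \emptyset$; here $l = m$ in case (i) and $l$ is the (unique) index with $u_l = 0$ in case (ii). Once such an $a_0$ is found, the set $2a_0 + k \cdot A_l$ lies inside $2 \cdot A_i + k \cdot A$ but is disjoint from $2 \cdot A_i + k \cdot A_i$, so it is contained in $\Delta_{ii}$ and has cardinality $|A_l|$, giving exactly the bound we want.

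To see when such an $a_0$ can be chosen, write all elements as $a = ky + u$: $a_0 = ky_0 + u_i$, $a_l = ky_l + u_l$, $a_i' = ky_i' + u_i$, $a_i'' = ky_i'' + u_i$. Any overlap equation $2a_0 + ka_l = 2a_i' + ka_i''$ simplifies, after dividing by $k$ and reducing mod $k$, to $2(y_0 - y_i') \equiv u_i - u_l \pmod{k}$. Since $k$ is odd, $2$ is a unit mod $k$, and this is equivalent to $y_0 \equiv y_i' + c \pmod{k}$ with $c \equiv 2^{-1}(u_i - u_l) \pmod{k}$. Consequently, it suffices to find $y_0 \in X_i$ whose residue mod $k$ lies outside $(X_i \bmod k) + c$.

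Such a $y_0$ exists whenever $(X_i \bmod k) + c \ne (X_i \bmod k)$ inside $\mathbb{Z}/k\mathbb{Z}$. The hypotheses make $c$ a unit: in case (i), $p \mid u_i$ and $p \nmid u_m$ give $p \nmid u_i - u_m$; in case (ii), $u_l = 0$ and $p \nmid u_i$ give $p \nmid u_i$. Since $k = p^{\alpha}$, in either case $\gcd(u_i - u_l, k) = 1$, so $c$ is a unit. As $\mathbb{Z}/k\mathbb{Z}$ is cyclic, the only subsets invariant under translation by $c$ are $\emptyset$ and the whole group; because $i \in E$ forces $|X_i \bmod k| \le |X_i| < k$ while $X_i$ is nonempty, $X_i \bmod k$ is a nonempty proper subset and hence not invariant. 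The main point requiring care is the coprimality step, which is precisely where the prime-power assumption on $k$ is used; everything else is formal manipulation that would go through for any odd $k$.
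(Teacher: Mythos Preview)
Your proof is correct and follows essentially the same route as the paper: both arguments reduce to showing that $\hat{X}_i$ is not invariant under translation by the unit $2^{-1}(u_i-u_l)$ in $\mathbb{Z}/k\mathbb{Z}$, which you deduce directly from the fact that a unit generates the cyclic group, while the paper invokes Chowla's theorem with the two-element set $\{0,u_l-u_i\}$. The only cosmetic difference is that you fix a single element $a_0\in A_i$ and exhibit $2a_0+k\cdot A_l\subseteq\Delta_{ii}$, whereas the paper fixes a residue class in $(2\hat{X}_i+u_l)\setminus(2\hat{X}_i+u_i)$; these are equivalent packagings of the same idea.
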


\begin{proof} 
 
(i) We have 

\begin{equation} |\Delta_{ii}|=|(2\cdot  A_i+k\cdot A)\setminus(2\cdot A_i+k\cdot A_i)|\ge|(2\cdot X_i+A_m)\setminus(2\cdot X_i+A_i)|.\label{1}
\end{equation}
On the other hand $(u_m-u_i,k)=1$, so using Lemma \ref{C} and that $|X_i|<k$, we obtain
 \[|2\cdot \hat{X}_i+\{0,u_m-u_i\}|\ge |\hat{X}_i|+1,
\] 
thus
\begin{equation}|(2\cdot \hat{X}_i+u_m)\setminus(2\cdot \hat{X}_i+u_i)|\ge 1.\label{2}
\end{equation}
Combining \eqref{1} and \eqref{2}, we conclude
\[|\Delta_{ii}|\ge |A_m||(2\cdot \hat{X}_i+u_m)\setminus(2\cdot \hat{X}_i+u_i)|\ge |A_m|.
\]

(ii) Similarly as in (i),

\[ |\Delta_{ii}|=|(2\cdot A_i+k\cdot A)\setminus(2\cdot A_i+k\cdot A_i)|\ge|(2\cdot X_i+A_l)\setminus(2\cdot X_i+A_i)|.
\]
We have $(u_l-u_i,k)=1$, so
\[|(2\cdot \hat{X}_i+u_l)\setminus(2\cdot \hat{X}_i+u_i)|\ge 1
\]
and 
\[|\Delta_{ii}|\ge |A_l||(2\cdot \hat{X}_i+u_l)\setminus(2\cdot \hat{X}_i+u_i)|\ge |A_l|.
\]

\end{proof}

\begin{lemma}\label{da} Let $A$ be a finite set of integers. If $k=p^\alpha$, where $p$ is an odd prime and $\alpha\in\mathbb{Z}_{\ge 1}$, then 
\[|2\cdot A+k\cdot A|\ge (k+2)|A|-4k^{k-1}.
\]
\end{lemma}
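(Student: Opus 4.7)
My plan is to prove the bound by strong induction on $|A|$, using the fact that $|2\cdot A+k\cdot A|$ is invariant under translation of $A$ and under dilation by a nonzero integer; these invariances let me assume $\gcd(A)=1$ and $0\in A$, so $u_1=0$, and since $k\ge 3$ no set with $\gcd=1$ can lie in a single residue class modulo $k$, forcing $j=c_k(A)\ge 2$. The starting point is the identity
\[
|2\cdot A+k\cdot A|=\sum_{i=1}^j\bigl(|2\cdot X_i+k\cdot X_i|+|\Delta_{ii}|\bigr),
\]
obtained by noting that for distinct $i$ the sets $2\cdot A_i+k\cdot A$ lie in distinct residue classes modulo the odd integer $k$ (hence are disjoint), and that the substitution $a=kx+u_i$ gives $|2\cdot A_i+k\cdot A_i|=|2\cdot X_i+k\cdot X_i|$.

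The base case $|A|\le 4k^{k-1}/(k+2)$ is vacuous, since the right-hand side of the target inequality is then non-positive. In the inductive step, if $j=k$ I would apply Corollary \ref{cor2} directly to get $|2\cdot A+k\cdot A|\ge(k+2)|A|-2k\ge(k+2)|A|-4k^{k-1}$ for $k\ge 3$. When $j<k$, I would bound $|2\cdot X_i+k\cdot X_i|$ by Corollary \ref{cor2} whenever $c_k(X_i)=k$ (yielding the small error $2k$), and invoke the inductive hypothesis on $X_i$ (whose cardinality is strictly less than $|A|$ since $j\ge 2$) only when $c_k(X_i)<k$. For indices $i\in E\setminus\{m\}$, Lemma \ref{imp} supplies $|\Delta_{ii}|\ge|A_m|$ (or $|A_l|$ via part (ii) when $u_l=0$) to compensate for small $X_i$; and whenever $|\Delta_{ii}|<|A_i|$ Lemma \ref{2-full} forces $c_2(A_i)=2$, which plugged into Proposition \ref{l} sharpens the estimate on $|2\cdot X_i+k\cdot X_i|$ through the bound $c_2(X_i)|X_i|+c_k(X_i)|X_i|-c_2(X_i)c_k(X_i)$.

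The main obstacle is the bookkeeping needed to close the induction. Naively applying the inductive hypothesis to each of the $j$ summands would incur a cumulative loss of $4jk^{k-1}$, vastly exceeding the target $4k^{k-1}$. The technical heart of the proof is therefore a careful case analysis arranging that almost every index is handled by Corollary \ref{cor2} (contributing only $O(k)$ error) or by the $|\Delta_{ii}|$ compensation from Lemma \ref{imp}, with the $j(j-1)$ surplus from Lemma \ref{graph} absorbing any residual slack, so that only a very controlled subset of indices triggers the recursive call. Verifying that this orchestration keeps the total loss below $4k^{k-1}$ in every configuration — in particular when several indices simultaneously satisfy $c_k(X_i)<k$ — is where I expect the bulk of the work to lie.
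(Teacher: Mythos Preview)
Your induction on $|A|$ runs into exactly the obstacle you name, and the case analysis you sketch cannot close it. For every $i\in E$ you must bound $|2\cdot X_i+k\cdot X_i|$; Corollary~\ref{cor2} is unavailable there (since $|X_i|<k$ forces $c_k(X_i)<k$), so each such index forces a recursive call and hence a loss of $4k^{k-1}$. The compensations you cite from Lemmas~\ref{imp} and~\ref{graph} affect only the \emph{main} term: they contribute quantities of size at most $|A|$ or $k^2$ respectively, and cannot offset even a single surplus $4k^{k-1}$ in the error. Thus as soon as $|E|\ge 2$ --- the generic situation --- your error budget is already blown, regardless of how the cases are split.

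The paper sidesteps this by inducting not on $|A|$ but on an auxiliary strength parameter $t$: it shows, for $t=2,3,\ldots,k$ in turn, that $|2\cdot A+k\cdot A|\ge (t+2)|A|-4k^{t-1}$ holds for \emph{every} finite $A$. The base $t=2$ is Corollary~\ref{cor1}. In the step from $t-1$ to $t$ one applies the $(t-1)$-bound to each $A_i$ with $i\in E$, incurring error only $4k^{t-2}$ per class; summed over at most $k$ classes this is $\le 4k^{t-1}$, matching the target exactly. The $|\Delta_{ii}|$ contributions (organised via Lemma~\ref{imp} and a case split on whether $\sum_{i\in E}|\Delta_{ii}|\ge\sum_{i\in E}|A_i|$) then supply the missing $|A_i|$ needed to lift the main term from $(t+1)|A_i|$ to $(t+2)|A_i|$. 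This geometric scaling of the allowed error with $t$ is the idea your proposal is missing.
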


\begin{proof} Let $T$ be the set of integers $t$ such that for every finite set $A\subset\mathbb{Z}$
\[|2\cdot A+k\cdot A|\ge (t+2)|A|-4k^{t-1}.
\]
We will use induction to prove $k\in T$. By Corollary \ref{cor1}, we obtain that $2\in T$. Let us assume that $2\le t\le k$ and $t-1\in T$. Let $A$ be a finite set of integers.

\emph{Case 1.} $\sum_{i\in E} |\Delta_{ii}|\ge \sum_{i\in E} |A_i|$

By Corollary \ref{cor2}, for every $i\in F$, we have $|2\cdot A_i+k\cdot A_i|\ge (k+2)|A_i|-2k$. On the other hand, if $i\in E$, using induction hypothesis we get $|2\cdot A_i+k\cdot A_i|\ge (t+1)|A_i|-4k^{t-2}$. Hence,

\begin{align*} 
|2\cdot A+k\cdot A| & = \sum_{i\in E}|2\cdot A_i+k\cdot A|+\sum_{i\in F}|2\cdot A_i+k\cdot A|\\ 
& \ge  \sum_{i\in E} (|2\cdot A_i+k\cdot A|+|\Delta_{ii}|)+\sum_{i\in F}|2\cdot A_i+k\cdot A_i|\\
& \ge  \sum_{i\in E}[(t+1)|A_i|-4k^{t-2}] +\sum_{i\in E}|\Delta_{ii}|+\sum_{i\in F}[(k+2)|A_i|-2k]\\
& \ge  \sum_{i\in E}[(t+1)|A_i|-4k^{t-2}] +\sum_{i\in E}|A_i|+\sum_{i\in F}[(k+2)|A_i|-2k]\\
& \ge (t+2)|A|-(4|E|k^{t-2}+2|F|k)\ge (t+2)|A|-4k^{t-1}
\end{align*} 

\emph{Case 2.} $\sum_{i\in E} |\Delta_{ii}|< \sum_{i\in E} |A_i|$. 

Without loss of generality we may assume that $\gcd(A)=1$ and $0\in A_1$. We define $n=\min\{i\in E\mid |\Delta_{ii}|<|A_i|\}$. By Lemma \ref{2-full}, we have $c_2(A_n)=2$. Let $m=\min\{1\le i\le j\mid p\nmid u_i\}$. By Lemma \ref{imp}, we have that $|\Delta_{ii}|\ge |A_m|$ for all $i\in E$. Note that $m\ne n$.

We have $m>n$. For if $m<n$, by Lemma \ref{imp}, we have that $|\Delta_{ii}|\ge |A_n|$ for all $i\in E$ such that $i\ge n$ and this leads to contradiction:
\begin{align*}
\sum_{i\in E} |\Delta_{ii}| & = \sum_{i\in E,i<n} |\Delta_{ii}|+\sum_{i\in E,i\ge n} |\Delta_{ii}|\\
& \ge \sum_{i\in E,i<n} |A_i|+\sum_{i\in E,i\ge n} |A_n|\ge \sum_{i\in E} |A_i|.
\end{align*}

Next, by the definition of $m$, we have $p\mid u_n,\ldots, p\mid u_{m-1}$, so $(u_n-u_m,k)=\cdots=(u_{m-1}-u_m,k)=1$. Using Lemma \ref{C}, we obtain
\[ |\hat{2\cdot X_m}+\{0, u_n-u_m, \ldots, u_{s}-u_m\}|\ge \min\{k,  |\hat{X}_m|+s-n+1\},\textrm{ for all }n\le s\le m-1.
\]
Let $s=n$. If $|\hat{X}_m|<k$, we have 
\[|(2\cdot \hat{X}_m+u_n)\setminus(2\cdot \hat{X}_m+u_m)|\ge 1\] 
and  
\[|(2\cdot X_m+A_n)\setminus(2\cdot X_m+A_m)|\ge |A_n|.\]
Now, let $n<s<m-1$ such that 
\[|(2\cdot X_m+(A_n\cup A_{n+1}\cup\ldots\cup A_s))\setminus(2\cdot X_m+A_m)|\ge |A_n|+|A_{n+1}|+\cdots+|A_s|\]
and let us assume that $|\hat{X}_m|+s-n+2\le k$. We have
\[ |(2\cdot \hat{X}_m+\{u_n, \ldots, u_{s}\})\setminus(2\cdot \hat{X}_m+u_m)|\ge s-n+1
\]
and 
\[ |(2\cdot \hat{X}_m+\{u_n, \ldots, u_{s},u_{s+1}\})\setminus(2\cdot \hat{X}_m+u_m)|\ge s-n+2,
\]
so
\[|(2\cdot X_m+(A_n\cup A_{n+1}\cup\ldots\cup A_{s+1})\setminus(2\cdot X_m+A_m)|\ge |A_n|+|A_{n+1}|+\cdots+|A_{s+1}|.\]

We distinguish two subcases.

\emph{Case 2a.} $|\hat{X}_m|+m-n\le k$.

We have 
\begin{align*} |\Delta_{mm}|&=|(2\cdot A_m+k\cdot A)\setminus (2\cdot A_m+k\cdot A_m)|\\
& = |(2\cdot X_m+A)\setminus (2\cdot X_m+A_m)|\\
& \ge |(2\cdot X_m+(A_n\cup A_{n+1}\cup\ldots\cup A_{m-1}))\setminus(2\cdot X_m+A_m)|\\
& \ge |A_n|+|A_{n+1}|+\cdots+|A_{m-1}|.
\end{align*} 
By Lemma \ref{imp}, we have $|\Delta_{ii}|\ge |A_m|$, for all $i\in E\setminus\{m\}$, so
\begin{align*} 
|2\cdot A+k\cdot A| & = \sum_{i\in E\setminus\{m\}}|2\cdot A_i+k\cdot A|+\sum_{i\in F\setminus\{m\}}|2\cdot A_i+k\cdot A|+|2\cdot A_m+k\cdot A|\\ 
& \ge  \sum_{i\in E\setminus\{m\}} (|2\cdot A_i+k\cdot A_i|+|\Delta_{ii}|)+\sum_{i\in F\setminus\{m\}}|2\cdot A_i+k\cdot A_i|\\
&\quad+|2\cdot A_m+k\cdot A_m|+|\Delta_{mm}|\\
& \ge  \sum_{i\in E\setminus\{m\}}[(t+1)|A_i|-4k^{t-2}] +\sum_{i\in E\setminus\{m\}}|\Delta_{ii}|+\sum_{i\in F\setminus\{m\}}[(k+2)|A_i|-2k]\\
&\quad +(t+1)|A_m|-4k^{t-2}+|A_n|+|A_{n+1}|+\cdots+|A_{m-1}|\\
& \ge  \sum_{i\in E\cup\{m\}}[(t+1)|A_i|-4k^{t-2}] +\sum_{i\in E\cup\{m\}}|A_i|+\sum_{i\in F\setminus\{m\}}[(k+2)|A_i|-2k]\\
& \ge (t+2)|A|-4(|E|+|F|)k^{t-2}\ge (t+2)|A|-4k^{t-1}.
\end{align*} 

\emph{Case 2b.} $|\hat{X}_m|+m-n> k$.

In this case
\[ |2\cdot \hat{X}_m+\{0,u_n-u_m, \ldots , u_{m-1}-u_m\}|=k
\]
and
\begin{equation}|(2\cdot X_m+(A_n\cup\ldots\cup A_m))\setminus(2\cdot X_m+A_n)|\ge (k-|\hat{X}_m|)|A_m|.\label{a}
\end{equation}
On the other hand, we have $c_2(X_n)=c_2(A_n)=2$, so by Proposition \ref{l}
\begin{equation}|2\cdot X_m+A_n|=|2\cdot X_m+k\cdot X_n| \ge 2|X_m|+|\hat{X}_m|(|X_n|-2)=2|A_m|+|\hat{X}_m|(|A_n|-2).\label{b}
\end{equation}
We have $|A_n|\ge |A_m|$. Thus, by \eqref{a} and \eqref{b},
\begin{align*} |2\cdot A_m+k\cdot A|&= |2\cdot X_m+A|\\
&\ge|2\cdot X_m+A_n|+|2\cdot X_m+(A_n\cup\ldots\cup A_m))\setminus(2\cdot X_m+A_n)|\\
& \ge 2|A_m|+|\hat{X}_m|(|A_n|-2)+(k-|\hat{X}_m|)|A_m|\\
& \ge (k+2)|A_m|+|\hat{X}_m|(|A_n|-|A_m|)-2k.
\end{align*} 
By the definition of $m$, we have $m\le p^{\alpha-1}+1$, so 
\[|\hat{X}_m|>k-m+n\ge k-m+1\ge p^{\alpha}-p^{\alpha-1}\ge p^{\alpha-1}\ge m-1.\]
Thus
\[ |2\cdot A_m+k\cdot A| \ge (k+2)|A_m|+m(|A_n|-|A_m|)-2k
\]
and
\begin{align*} 
|2\cdot A+k\cdot A| & = \sum_{i\in E\setminus\{m\}}|2\cdot A_i+k\cdot A|+\sum_{i\in F\setminus\{m\}}|2\cdot A_i+k\cdot A|+|2\cdot A_m+k\cdot A|\\ 
& \ge  \sum_{i\in E\setminus\{m\}} (|2\cdot A_i+k\cdot A_i|+|\Delta_{ii}|)+\sum_{i\in F\setminus\{m\}}|2\cdot A_i+k\cdot A_i|\\
&\quad+(k+2)|A_m|+m(|A_n|-|A_m|)-2k\\
& \ge  \sum_{i\in E\setminus\{m\}}[(t+1)|A_i|-4k^{t-2}] +\sum_{i\in E\setminus\{m\}}|\Delta_{ii}|+\sum_{i\in F\setminus\{m\}}[(k+2)|A_i|-2k]\\
&\quad+ (k+2)|A_m|+m(|A_n|-|A_m|)-2k\\
& \ge  \sum_{i\in E\setminus\{m\}}[(t+1)|A_i|-4k^{t-2}] +\sum_{i\in E\setminus\{m\}}|A_i|+\sum_{i\in F\setminus\{m\}}[(k+2)|A_i|-2k]\\
&\quad+ (k+2)|A_m|-2k\\
& \ge (k+2)|A|-4(|E|+|F|)k^{t-2}\ge (k+2)|A|-4k^{t-1}.
\end{align*} 

\end{proof}

\textbf{Proof of Theorem \ref{thm1}.}  If $j=k$, applying Corollary \ref{cor2}, we obtain $|2\cdot A+k\cdot A|\ge (k+2)|A|-2k\ge (k+2)|A|-k^2-k+2$. We assume $j< k$. Without loss of generality we also assume that $\gcd(A)=1$ and $0\in A_1$. We have $|A_1|\ge \frac{|A|}{j}> 8k^{k-1}$. Let $m=\min\{1\le i\le j\mid p\nmid u_i\}$. We distinguish two cases.

\emph{Case 1.} $E=\emptyset$.

By Corrolary \ref{cor2} and Lemma \ref{graph}, we have
\begin{align*} 
|2\cdot A+k\cdot A| & = \sum_{i=1}^j |2\cdot A_i+k\cdot A|\\
& = \sum_{i=1}^j (|2\cdot A_i+k\cdot A_i|+|\Delta_{ii}|)\\
& = \sum_{i=1}^j |2\cdot X_i+k\cdot X_i|+ \sum_{i=1}^j|\Delta_{ii}|\\
& \ge \sum_{i=1}^j [(k+2)|X_i|-2k]+j(j-1)\\
& = (k+2)|A|-j(2k-j+1)\\
& \ge (k+2)|A|-k^2-k+2.
\end{align*} 

\emph{Case 2.} $E\ne\emptyset$. 

We consider following subcases.

\emph{Case 2a.} $m\in E$

By Lemma \ref{imp}, we have $|\Delta_{mm}|\ge |A_1|$. Applying Lemma \ref{da}, we obtain
\begin{align*} 
|2\cdot A+k\cdot A| & \ge |2\cdot A_m+k\cdot A|+|2\cdot (A\setminus A_m)+k\cdot (A\setminus A_m)|\\
& = |2\cdot A_m+k\cdot A_m|+|\Delta_{mm}|+(k+2)|A\setminus A_m|-4k^{k-1}\\
& \ge (k+2)|A_m|-4k^{k-1}+|A_1|+(k+2)|A\setminus A_m|-4k^{k-1}\\
& >(k+2)|A|.\\
\end{align*} 

\emph{Case 2b.} $m\in F$. 

If $|\Delta_{11}|\ge |A_1|$, we have
\begin{align*} 
|2\cdot A+k\cdot A| & \ge |2\cdot A_1+k\cdot A|+|2\cdot (A\setminus A_1)+k\cdot (A\setminus A_1)|\\
& = |2\cdot A_1+k\cdot A_1|+|\Delta_{11}|+(k+2)|A\setminus A_m|-4k^{k-1}\\
& \ge (k+2)|A_1|-4k^{k-1}+|A_1|+(k+2)|A\setminus A_m|-4k^{k-1}\\
& >(k+2)|A|.\\
\end{align*}  

If $|\Delta_{11}|< |A_1|$, then by Lemma \ref{2-full}, we have $c_2(A_1)=2$. Since $E\ne\emptyset$, there exists $s\in E$. By Lemma \ref{imp}, we have $|\Delta_{ss}|\ge |A_m|$ if $p\mid u_s$ and $|\Delta_{ss}|\ge |A_1|$ if $p\nmid u_s$. Since $|A_1|\ge |A_m|$, we obtain $|\Delta_{ss}|\ge |A_m|$. We denote $A'=A\setminus (A_m\cup A_s)$. Applying Proposition \ref{l}, we obtain 
\begin{align*} 
|2\cdot A+k\cdot A| & \ge |2\cdot A_m+k\cdot A|+|2\cdot A_s+k\cdot A|+|2\cdot A'+k\cdot A'|\\
& \ge |2\cdot A_m+k\cdot A_1|+|2\cdot A_s+k\cdot A_s|+|\Delta_{ss}|+(k+2)|A'|-4k^{k-1}\\
& \ge 2|A_m|+k|A_1|-2k+(k+2)|A_s|-4k^{k-1}+|A_m|+(k+2)|A'|-4k^{k-1}\\
& =(k+2)|A|+|A_1|-8k^{k-1}-2k\\
& > (k+2)|A|-2k
\end{align*}

This ends the proof.

\section{The case $k=pq$}

\begin{lemma} \label{imp2} Let $A$ be a finite set of integers such that $\gcd(A)=1$. Let $k=pq$, where $p$ and $q$ are distinct odd prime numbers. Let $m=\min\{1\le i\le j\mid p\nmid u_i\}$ and let $i\in E$.
\begin{enumerate} 
\item[(i)] If $(u_2,k)=1$, then 
\begin{equation*} 
|\Delta_{ii}| \ge \left\{ 
\begin{array}{rl} 
|A_1| & \text{if } i=2\\ 
|A_2| & \text{if } i\ne 2
\end{array} \right. 
\end{equation*} 

\item[(ii)] If $(u_2,k)=p$, then 
\begin{equation*} 
|\Delta_{ii}| \ge \left\{ 
\begin{array}{ll} 
\min\{|A_2|,q|A_m|\} & \text{if } i=1\\ 
\min\{|A_1|,q|A_m|\} & \text{if } 1< i < m\\
|A_2| & \text{if } i=m\\ 
\min\{|A_1|,|A_2|,q|A_m|\}  & \text{if } i>m
\end{array} \right. 
\end{equation*} 

\end{enumerate} 
\end{lemma}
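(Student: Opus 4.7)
My approach parallels the prime-power argument in Lemma \ref{imp}, but uses Lemma \ref{l6} to absorb the failures of Chowla's inequality (Lemma \ref{C}) that appear when $(u_l-u_i,k)\in\{p,q\}$. The backbone is the master estimate
\[|\Delta_{ii}|\ge |A_l|\cdot |(2\hat X_i+u_l)\setminus(2\hat X_i+u_i)|\]
valid for every index $l$. It follows from $\Delta_{ii}=2u_i+k\cdot((2X_i+A)\setminus(2X_i+A_i))$ (whence $|\Delta_{ii}|=|(2X_i+A)\setminus(2X_i+A_i)|$) together with the fact that each residue class $r\in(2\hat X_i+u_l)\setminus(2\hat X_i+u_i)$ is hit at least $|A_l|$ times by $2X_i+A_l\subseteq 2X_i+A$ (fix $x\in X_i$ with $2x+u_l\equiv r\pmod k$ and vary the element of $A_l$), while being missed entirely by $2X_i+A_i$. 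The assumption $i\in E$ gives $|2\hat X_i|\le|\hat X_i|\le|X_i|<k$, so $2\hat X_i\ne\mathbb Z/k\mathbb Z$.

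In every sub-case I will run the same two-step dichotomy. \emph{Step 1}: pick a comparison index $l$ and test whether $2\hat X_i+u_l\ne 2\hat X_i+u_i$; a single new residue then already gives $|\Delta_{ii}|\ge|A_l|$. \emph{Step 2}: if equality holds, Lemma \ref{l6} forces $2\hat X_i$ to be a union of cosets of the subgroup of order $k/d$, where $d:=(u_l-u_i,k)\in\{p,q\}$ (the values $1$ and $k$ are ruled out by $2\hat X_i\ne\mathbb Z/k\mathbb Z$ and $u_l\ne u_i$). I then switch to a second index $l'$ with $\pi_d(u_{l'}-u_i)\ne 0$; since $\mathbb Z/d\mathbb Z$ has prime order and the projection of $2\hat X_i$ is a proper subset, the coset structure converts one new residue mod $d$ into $k/d$ new residues mod $k$, boosting the bound by a factor $k/d\in\{p,q\}$.

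\textbf{Case (i).} Here $m=2$. For $i=2$, Chowla applied with $l=1$ works since $(u_1-u_2,k)=(u_2,k)=1$, giving $|\Delta_{22}|\ge|A_1|$. For $i\ne 2$, run Step 1 with $l=2$; if it fails, run Step 2 with $l'=1$: since $u_i\equiv u_2\pmod d$ and $(u_2,d)=1$ we have $u_i\not\equiv 0\pmod d$, so $\pi_d(u_1-u_i)\ne 0$ and $|\Delta_{ii}|\ge(k/d)|A_1|\ge|A_2|$.

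\textbf{Case (ii).} The usual choices are $l=2$ (exploiting $(u_2,k)=p$) and $l'=m$ (exploiting $p\nmid u_m$, so $\pi_p(u_m)\ne 0$). For $i=1$, Step 1 with $l=2$ yields $|A_2|$; in Step 2, $(u_2,k)=p$ makes $2\hat X_1$ $p$-coset-periodic, and $l'=m$ yields $q|A_m|$. For $1<i<m$, use $l=1$ in Step 1 (yielding $|A_1|$) and $l'=m$ in Step 2, where the period $(u_i,k)=p$ and $p\mid u_i$ guarantee $2\hat X_i+u_i=2\hat X_i$, yielding $q|A_m|$. For $i=m$, split on $(u_m,k)\in\{1,q\}$: if $(u_m,k)=1$ then Chowla with $l=1$ gives $|A_1|\ge|A_2|$, while if $(u_m,k)=q$ then $(u_2-u_m,k)=1$ and Chowla with $l=2$ gives $|A_2|$. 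For $i>m$, where $(u_i,k)$ can be any of $1,p,q$, run Step 1 with $l=2$; if it fails, branch on $d=(u_2-u_i,k)\in\{p,q\}$. If $d=p$ then $(u_i,k)=p$ and $\pi_p(u_i)=0$, which is the same situation as for $1<i<m$ and produces $|\Delta_{ii}|\ge q|A_m|$ via $l'=m$. If $d=q$ then $q\nmid u_i$ (since $q\nmid u_2$), so $\pi_q(u_i)\ne 0$ and $l'=1$ gives $|\Delta_{ii}|\ge p|A_1|\ge|A_1|$.

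The main obstacle is the bookkeeping for $i>m$ in (ii): since $u_i$ may land in any of the three nontrivial coprimality classes modulo $k=pq$, the primary comparison index and the quotient $\mathbb Z/d\mathbb Z$ used to extract the factor $k/d$ must be tailored case by case, and one must verify each time that the relevant image of $u_{l'}-u_i$ is nonzero in $\mathbb Z/d\mathbb Z$ and that the projection of $2\hat X_i$ remains a proper subset.
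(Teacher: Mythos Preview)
Your proof is correct and follows essentially the same strategy as the paper's: both arguments rest on the estimate $|\Delta_{ii}|\ge |A_l|\cdot|(2\hat X_i+u_l)\setminus(2\hat X_i+u_i)|$, and both invoke Lemma~\ref{l6} to convert a stabilizing translate $2\hat X_i+u_l=2\hat X_i+u_i$ into a coset decomposition that is then broken by a second index whose residue is nonzero in the appropriate prime quotient. The only differences are organizational: in part~(i) for $i\ne 1,2$ the paper argues in one line (since $2\hat X_i+u_1\ne 2\hat X_i+u_2$, at least one of them must differ from $2\hat X_i+u_i$), whereas you run the full two-step dichotomy and extract the unnecessary factor $k/d$; and in part~(ii) for $i>m$ the paper branches first on $(u_i,k)$ and then chooses $l\in\{1,2\}$ accordingly, while you always start with $l=2$ and branch on $d=(u_2-u_i,k)$ --- both routings land on the same three outcomes $|A_2|$, $q|A_m|$, $|A_1|$.
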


\begin{proof} 

(i) By Lemma \ref{l6}, if $i\in E$, we have $2\cdot \hat{X_i}=2\cdot \hat{X_i}+u_1\ne 2\cdot \hat{X_i}+u_2$. Otherwise, $k\mid |X_i|$, a contradiction. Thus, if $1\in E$, we have
\begin{align*}  |\Delta_{11}| & =|(2\cdot  A_1+k\cdot A)\setminus(2\cdot A_1+k\cdot A_1)|\ge|(2\cdot X_1+A_2)\setminus(2\cdot X_1+A_1)|\\
& \ge |A_2||(2\cdot \hat{X}_1+u_2)\setminus(2\cdot \hat{X}_1+u_1)|\ge |A_2|.
\end{align*} 
Similarly, if $2\in E$, we have $|\Delta_{11}| \ge |A_1|$. 

Now, let $i\in E$ and $i\ne 1,2$. Since $2\cdot \hat{X_i}+u_1\ne 2\cdot \hat{X_i}+u_2$, we have that $2\cdot \hat{X_i}+u_i\ne 2\cdot \hat{X_i}+u_1$, in which case $|\Delta_{ii}| \ge |A_1|$ or $2\cdot \hat{X_i}+u_i\ne 2\cdot \hat{X_i}+u_2$, in which case $|\Delta_{ii}| \ge |A_2|$. In both cases $|\Delta_{ii}| \ge |A_2|$.

(ii) Let $1\in E$. Then $2\cdot \hat{X_1}+u_1\ne 2\cdot \hat{X_1}+u_2$ or $2\cdot \hat{X_1}+u_1= 2\cdot \hat{X_1}+u_2$. If $2\cdot \hat{X_1}+u_1\ne 2\cdot \hat{X_1}+u_2$, we obtain, as in (i), that $|\Delta_{11}| \ge |A_2|$. If 
 $2\cdot \hat{X_1}+u_1= 2\cdot \hat{X_1}+u_2$, by Lemma \ref{l6}, we have that 
\[2\cdot \hat{X_1}=\bigcup_{\beta\in I} (p\cdot\{0,1,\ldots,q-1\}+\beta)
\]
for some nonempty set $I\subset\mathbb{Z}/p\mathbb{Z}$. Moreover, $p\nmid u_m$, thus $I+u_m\ne I$ and $|(2\cdot\hat{X_1}+u_m)\setminus (2\cdot\hat{X_1}+u_1)|\ge q$. We obtain
 \begin{align*}  |\Delta_{11}| & =|(2\cdot  A_1+k\cdot A)\setminus(2\cdot A_1+k\cdot A_1)|\ge|(2\cdot X_1+A_m)\setminus(2\cdot X_1+A_1)|\\
& \ge |A_m||(2\cdot \hat{X}_1+u_m)\setminus(2\cdot \hat{X}_1+u_1)|\ge q|A_m|.
\end{align*} 

Next, if $i<m$, we have that $p\mid u_i$ and $(k,u_i)=p$. As above, we have $2\cdot \hat{X_i}+u_i\ne 2\cdot \hat{X_i}+u_1$ or $2\cdot \hat{X_i}+u_i= 2\cdot \hat{X_i}+u_1$ and we obtain $|\Delta_{ii}|\ge |A_1|$ or $|\Delta_{ii}|\ge q|A_m|$.

If $m\in E$, we have $p\nmid u_m$. Thus, $q\nmid u_m$, in which case $(k,u_m)=1$, or $q\mid u_m$, in which case $(k,u_m-u_2)=1$. Thus, $2\cdot \hat{X_m}+u_m\ne 2\cdot \hat{X_m}+u_1$ or $2\cdot \hat{X_m}+u_m\ne 2\cdot \hat{X_m}+u_2$. We have 
 \begin{align*}  |\Delta_{mm}| =|(2\cdot X_m+A)\setminus(2\cdot X_1+A_m)|\ge |A_2|.
\end{align*} 

Finally, if $i>m$, we have $(k,u_i)=1$ or $(k,u_i)=p$ or $(k,u_i)=q$. If $(k,u_i)=1$, we have  $|\Delta_{ii}|\ge |A_1|$. If $(k,u_i)=p$, we obtain $|\Delta_{ii}|\ge |A_1|$ or $|\Delta_{ii}|\ge q|A_m|$. If $(k,u_i)=q$, we have $(k,u_m-u_2)=1$ and $|\Delta_{ii}|\ge |A_2|$.

\end{proof}

\begin{lemma} Let $A$ be a finite set of integers. If $k=pq$, where $p$ and $q$ are distinct odd primes, then 
\[|2\cdot A+k\cdot A|\ge (k+2)|A|-4k^{k-1}.
\]
\end{lemma}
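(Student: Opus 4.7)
The plan is to mirror Lemma \ref{da}: prove by induction on $t\in\{2,3,\ldots,k\}$ that every finite $A\subset\mathbb{Z}$ satisfies $|2\cdot A+k\cdot A|\ge (t+2)|A|-4k^{t-1}$, the desired bound being the case $t=k$. The base $t=2$ follows from Corollary \ref{cor1} since $4|A|-4\ge 4|A|-4k$. For the inductive step, use that $k=pq$ is odd, so the sets $2\cdot A_i+k\cdot A$ lie in distinct residues modulo $k$, giving
\[|2\cdot A+k\cdot A|=\sum_{i=1}^j\bigl(|2\cdot A_i+k\cdot A_i|+|\Delta_{ii}|\bigr).\]
Bound $|2\cdot A_i+k\cdot A_i|\ge (k+2)|A_i|-2k$ for $i\in F$ via Corollary \ref{cor2} and $\ge (t+1)|A_i|-4k^{t-2}$ for $i\in E$ via the induction hypothesis; this reduces the task to a lower bound on $\sum_i|\Delta_{ii}|$. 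When $\sum_{i\in E}|\Delta_{ii}|\ge\sum_{i\in E}|A_i|$, the chain of inequalities in Case 1 of Lemma \ref{da} goes through verbatim.

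Otherwise, some $n\in E$ has $|\Delta_{nn}|<|A_n|$; normalizing so that $\gcd(A)=1$ and $0\in A_1$, the proof of Lemma \ref{2-full} (which uses only that $k$ is odd) forces $c_2(A_n)=2$. Set $m=\min\{i:p\nmid u_i\}$, and by swapping the roles of $p$ and $q$ if necessary assume $(u_2,k)\in\{1,p\}$. In the easy sub-case $(u_2,k)=1$, Lemma \ref{imp2}(i) gives $|\Delta_{ii}|\ge |A_2|\ge |A_i|$ for every $i\in E$ with $i\ge 2$, and $|\Delta_{22}|\ge |A_1|$ if $2\in E$; these bounds recover $\sum_{i\in E}|A_i|$ within the sum and the argument closes as before.

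The substantive sub-case is $(u_2,k)=p$. Here Lemma \ref{imp2}(ii) secures $|\Delta_{ii}|\ge |A_m|$ for every $i\in E$, but this may still fall short of $\sum_{i\in E}|A_i|$, so the $i=m$ contribution must be upgraded. Paralleling Cases 2a and 2b of Lemma \ref{da}, enlarge $2\cdot\hat{X}_m$ by successively adding the residues $u_i-u_m$ for $n\le i\le m-1$: those differences coprime to $k$ are absorbed by Chowla's lemma (Lemma \ref{C}), while any index $i$ with $q\mid u_i-u_m$ is handled instead by Lemma \ref{l8}, whose hypothesis $|A+\{0,\bar q\}|\ge |A|+1$ is verified through Lemma \ref{l6} applied with $\alpha=\bar q$, $(k,\alpha)=q$, and the constraint $|X_m|<k$. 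If $|\hat{X}_m|+m-n\le k$ this yields $|\Delta_{mm}|\ge |A_n|+\cdots+|A_{m-1}|$; otherwise Proposition \ref{l} combined with $c_2(X_n)=2$ generates the residual $(k-|\hat{X}_m|)|A_m|$ elements. The bound $m\le q+1$, valid because only the $q$ residues $0,p,2p,\ldots,(q-1)p$ modulo $k$ are divisible by $p$, makes the arithmetic close as in Lemma \ref{da}.

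The main obstacle is precisely this hybrid step-by-step growth of $2\cdot\hat{X}_m$: unlike the prime-power case, the differences $u_i-u_m$ are not uniformly coprime to $k$, so each incremental growth must be certified separately by Chowla's lemma or by Lemma \ref{l8}, with the hypothesis of the latter checked via Lemma \ref{l6}. This bookkeeping is where the proof genuinely diverges from Lemma \ref{da} and where the full strength of the imported Lemmas \ref{l6} and \ref{l8} is used.
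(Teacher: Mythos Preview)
Your inductive framework, the reduction to $\sum_{i\in E}|\Delta_{ii}|<\sum_{i\in E}|A_i|$, and the normalization $(u_2,k)\in\{1,p\}$ all match the paper. The genuine gap is in your treatment of the sub-case $(u_2,k)=p$. You claim the hypothesis of Lemma~\ref{l8}, namely $|2\cdot\hat X_m+\{0,\overline{u_l-u_m}\}|\ge|2\cdot\hat X_m|+1$, is ``verified through Lemma~\ref{l6} \ldots\ and the constraint $|X_m|<k$.'' But Lemma~\ref{l6} only tells you that if $2\cdot\hat X_m+(u_l-u_m)=2\cdot\hat X_m$ then $p=\tfrac{k}{(k,u_l-u_m)}$ divides $|\hat X_m|$; the bound $|\hat X_m|<k=pq$ does \emph{not} rule out $p\mid|\hat X_m|$ (e.g.\ $|\hat X_m|=p,2p,\ldots,(q-1)p$). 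So for $|\hat X_m|\ge p$ the step-by-step growth of $2\cdot\hat X_m$ by Chowla/Lemma~\ref{l8} can stall, and your analogue of Case~2a of Lemma~\ref{da} does not go through.

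This is exactly why the paper's proof departs from the template of Lemma~\ref{da}: it first separates $m\in F$ from $m\in E$, then (for $m\in E$) separates $|A_1|\le q|A_m|$ from $|A_1|>q|A_m|$, and finally splits the latter into four sub-cases according to the position of $|\hat X_m|$ relative to $p$ and $q$. Only in the sub-case $|\hat X_m|<p$ does the Chowla/Lemma~\ref{l8} incremental argument work as you describe; in the remaining sub-cases ($|\hat X_m|\ge p>q$, $|\hat X_m|\ge q>p$, $p\le|\hat X_m|<q$) the paper abandons that approach and instead bounds $|2\cdot X_m+A|$ directly via Proposition~\ref{l} applied to $2\cdot X_m+A_n$ (or $2\cdot X_m+A_1$), exploiting $c_2(A_n)=2$ together with the extra hypothesis $|A_n|>q|A_m|$. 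Your dichotomy ``$|\hat X_m|+m-n\le k$ or not,'' taken from Lemma~\ref{da}, is too coarse here. A smaller point: your sketch of the case $(u_2,k)=1$ is also incomplete, since Lemma~\ref{imp2}(i) gives only $|\Delta_{11}|\ge|A_2|$, which need not dominate $|A_1|$; the paper forces $2\in F$ and then replaces the inductive bound on $|2\cdot A_2+k\cdot A_2|$ by the stronger estimate $|2\cdot A_2+k\cdot A_1|\ge 2|A_2|+k|A_1|-2k$ from Proposition~\ref{l}.
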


\begin{proof} Let $T$ be the set of integers $t$ such that for every finite set $A\subset\mathbb{Z}$
\[|2\cdot A+k\cdot A|\ge (t+2)|A|-4k^{t-1}.
\]
As in the proof of Lemma \ref{da}, we will use induction to prove $k\in T$. By Corollary \ref{cor1}, we have that $2\in T$. Let us assume that $2\le t\le k$ and $t-1\in T$. Let $A$ be a finite set of integers. Without loss of generality we may assume that $\gcd(A)=1$ and that $0\in A_1$. We define $m=\min\{1\le i\le j\mid p\nmid u_i\}$.

If $\sum_{i\in E} |\Delta_{ii}|\ge \sum_{i\in E} |A_i|$ the same proof holds as in Lemma \ref{da}. Let us assume that $\sum_{i\in E} |\Delta_{ii}|< \sum_{i\in E} |A_i|$. We define $n=\min\{i\in E\mid |\Delta_{ii}|<|A_i|\}$.

\emph{Case 1.} $(u_2,k)=1$. We have $2\in F$. Otherwise, $2\in E$ and by Lemma \ref{imp2}, we have $\sum_{i\in E} |\Delta_{ii}|\ge \sum_{i\in E} |A_i|$, a contradiction. Moreover, since $|\Delta_{ii}| \ge |A_2|$ for all $i\in E$, we obtain that $1\in E$ and $|\Delta_{11}|<|A_1|$. By Lemma \ref{2-full}, we have $c_2(A_1)=2$. We obtain

\begin{align*} 
|2\cdot A+k\cdot A| & = \sum_{i\in E}|2\cdot A_i+k\cdot A|+\sum_{i\in F\setminus\{2\}}|2\cdot A_i+k\cdot A|+|2\cdot A_2+k\cdot A|\\ 
& \ge  \sum_{i\in E} (|2\cdot A_i+k\cdot A_i|+|\Delta_{ii}|)+\sum_{i\in F\setminus\{2\}}|2\cdot A_i+k\cdot A_i|\\
&\quad+|2\cdot A_2+k\cdot A_1|\\
& \ge  \sum_{i\in E}[(t+1)|A_i|-4k^{t-2}] +\sum_{i\in E}|\Delta_{ii}|+\sum_{i\in F\setminus\{2\}}[(k+2)|A_i|-2k]\\
&\quad +2|A_2|+k|A_1|-2k\\
& \ge  \sum_{i\in E}[(t+1)|A_i|-4k^{t-2}] +\sum_{i\in E}|A_2|+\sum_{i\in F\setminus\{2\}}[(k+2)|A_i|-2k]\\
&\quad +(k+1)|A_2|+|A_1|-2k\\
& \ge (t+2)|A|-4(|E|+|F|)k^{t-2}\ge (t+2)|A|-4k^{t-1}.
\end{align*} 

\emph{Case 2.} $(u_2,k)=p$. Thus $m\ge 3$. By Lemma \ref{2-full}, we have $c_2(A_n)=2$. By Lemma \ref{imp2}, we have $|\Delta_{ii}|\ge |A_m|$ for all $i\in E$. In particular $m\ne n$. Similarly as in Lemma \ref{da}, we obtain $m>n$. We have
\begin{align*} 
|2\cdot A+k\cdot A| & = \sum_{i\in E\setminus\{m\}}|2\cdot A_i+k\cdot A|+\sum_{i\in F\setminus\{m\}}|2\cdot A_i+k\cdot A|+|2\cdot A_m+k\cdot A|\\ 
& \ge  \sum_{i\in E\setminus\{m\}} (|2\cdot A_i+k\cdot A_i|+|\Delta_{ii}|)+\sum_{i\in F\setminus\{m\}}|2\cdot A_i+k\cdot A_i|\\
&\quad+|2\cdot X_m+A|\\
& \ge  \sum_{i\in E\setminus\{m\}}(t+1)|A_i| +\sum_{i\in E\setminus\{m\}}|\Delta_{ii}|+\sum_{i\in F\setminus\{m\}}(t+2)|A_i|+|2\cdot X_m+A|\\
&\quad-(\sum_{i\in E\setminus\{m\}}4k^{t-2}+\sum_{i\in F\setminus\{m\}}2k).
\end{align*}  

If $m\in F$, using Proposition \ref{l}, we obtain 
\begin{align*} 
|2\cdot X_m+A|&\ge |2\cdot X_m+A_n|=|2\cdot X_m+k\cdot X_n|\ge 2|X_m|+k|X_n|-2k\\
&=(k+2)|A_m|+k(|A_n|-|A_m|)-2k.
\end{align*} 
Thus
\begin{align*} 
|2\cdot A+k\cdot A| & \ge  \sum_{i\in E}(t+1)|A_i| +\sum_{i\in E}|\Delta_{ii}|+\sum_{i\in F\setminus\{m\}}(t+2)|A_i|\\
&\quad+(t+2)|A_m|+k(|A_n|-|A_m|)-(|E|4k^{t-2}+|F|2k)\\
&\ge (t+2)|A|-4k^{t-1}.
\end{align*}  

Next, let us assume $m\in E$. We have
\begin{align} \label{last}
|2\cdot A+k\cdot A| &\ge  \sum_{i\in E\setminus\{m\}}(t+1)|A_i| +\sum_{i\in E\setminus\{m\}}|\Delta_{ii}|+\sum_{i\in F}(t+2)|A_i|+|2\cdot X_m+A|\\
&\quad-(\sum_{i\in E\setminus\{m\}}4k^{t-2}+\sum_{i\in F}2k).\notag
\end{align}  
If $|A_1|\le q|A_m|$, using Lemma \ref{imp2}, we obtain that $1\in E$ and 
\[\sum_{i\in E} |\Delta_{ii}|\ge |A_2|+ \sum_{i\in E, i\ge 2} |A_i|.
\]
In particular, $|\Delta_{11}|< |A_1|$. Moreover, $2\not\in E$, otherwise, by Lemma \ref{imp2}, $|\Delta_{ii}|\ge |A_1|$ and $\sum_{i\in E} |\Delta_{ii}|\ge \sum_{i\in E} |A_i|$. Using the same argument as in the \emph{Case 1}, we obtain
\[|2\cdot A+k\cdot A| \ge (t+2)|A|-4k^{t-1}.
\]

We assume $|A_1|> q|A_m|$. Then $|A_n|> q|A_m|$. Otherwise, $n\ge 2$ and by Lemma \ref{imp2}, we have $|\Delta_{ii}|\ge |A_n|$, for all $i\in E$ and $\sum_{i\in E} |\Delta_{ii}|\ge \sum_{i\in E} |A_i|$, a contradiction. By Lemma \ref{imp2}, $|\Delta_{11}|\ge \min\{|A_2|,q|A_m|\}$, $|\Delta_{ii}|\ge q|A_m|$ for all $i\in E$ such that $1<i<m$ and $|\Delta_{ii}|\ge |A_m|$ for all $i\in E$ such that $i\ge m$. We need to consider separately the cases $|\hat{X}_m|<p$ and $|\hat{X}_m|\ge p$. Moreover, the case $|\hat{X}_m|\ge p$, we will subdivided in three subcases: $p\le |\hat{X}_m|< q$, $|\hat{X}_m|\ge p>q$ and $|\hat{X}_m|\ge q> p$. We will use that $m\le q+1$.

\emph{Case 2a.} $|\hat{X}_m|\ge p>q$. By Corollary \ref{cor2}, we have 
\begin{align*} |2\cdot X_m+A| & \ge |2\cdot X_m+A_n|\ge 2|X_m|+|\hat{X}_m||X_n|-2k\\
& \ge (k+2)|A_m|+p(|A_n|-q|A_m|)-2k\\
& \ge (t+2)|A_m|+(m-1)(|A_n|-q|A_m|)-2k.
\end{align*} 
If $n>1$, by (\ref{last}), we have
\begin{align*} 
|2\cdot A+k\cdot A| & \ge  \sum_{i\in E\setminus\{m\}}(t+1)|A_i|+\sum_{i\in E\setminus\{m\}}|\Delta_{ii}|+\sum_{i\in F}(t+2)|A_i| \\
&\quad+(t+2)|A_m|+(m-1)(|A_n|-q|A_m|)\\
&\quad-((|E|-1)4k^{t-2}+(|F|+1)2k)\\
&\ge (t+2)|A|-4k^{t-1}.
\end{align*}  
If $n=1$, then $c_2(A_1)=2$. We need to consider following subcases. 

If $2\in E$, by Lemma \ref{imp2}, we have that $|\Delta_{11}|\ge \min\{|A_2|,q|A_m|\}$ and $|\Delta_{22}|\ge \min\{|A_1|,q|A_m|\}$, so the above proof holds.

If $2\in F$, using Proposition \ref{l}, we obtain 
\[
|2\cdot X_2+A|\ge |2\cdot X_2+A_1|=|2\cdot X_2+k\cdot X_1|\ge 2|A_2|+k|A_1|-2k
\]
so
\begin{align*} 
|2\cdot A+k\cdot A| & = \sum_{i\in E\setminus\{m\}}|2\cdot A_i+k\cdot A|+\sum_{i\in F\setminus\{2\}}|2\cdot A_i+k\cdot A|\\
&\quad +|2\cdot A_2+k\cdot A|+|2\cdot A_m+k\cdot A|\\ 
& \ge  \sum_{i\in E\setminus\{m\}} (|2\cdot A_i+k\cdot A_i|+|\Delta_{ii}|)+\sum_{i\in F\setminus\{2\}}|2\cdot A_i+k\cdot A_i|\\
&\quad+|2\cdot X_2+A|+|2\cdot X_m+A|\\
& \ge  \sum_{i\in E\setminus\{m\}}(t+1)|A_i| +\sum_{i\in E\setminus\{m\}}|\Delta_{ii}|+\sum_{i\in F\setminus\{2\}}(t+2)|A_i|\\
&\quad+2|A_2|+k|A_1|+(t+2)|A_m|+(m-1)(|A_1|-q|A_m|)\\
&\quad-((|E|-1)4k^{t-2}+(|F|+1)2k)\\
&\ge (t+2)|A|-4k^{t-1}.
\end{align*}

\emph{Case 2b.} $|\hat{X}_m|\ge q>p$. Similarly as in previous case, we obtain 
\begin{align*} |2\cdot X_m+A| & \ge |2\cdot X_m+A_n|\ge 2|X_m|+|\hat{X}_m||X_n|-2k\\
& \ge (k+2)|A_m|+q(|A_n|-p|A_m|)-2k\\
& \ge (t+2)|A_m|+(m-1)(|A_n|-q|A_m|)-2k
\end{align*} 
and
\[
|2\cdot A+k\cdot A| \ge (t+2)|A|-4k^{t-1}.
\]

\emph{Case 2c.} $|\hat{X}_m|<p$. We have $|\hat{X}_m|+m-1<p+q\le pq=k$. Let $L=\{1\le i\le m-1\mid (u_i-u_m,k)\ne 1\}$. If $L=\emptyset$, then $(u_1-u_m,k)=\cdots=(u_{m-1}-u_m,k)=1$. Using Lemma \ref{C}, we obtain
\[ |2\cdot\hat{X}_m+\{0, u_1-u_m, \ldots, u_{s}-u_m\}|\ge |\hat{X}_m|+s-1,\textrm{ for all }1\le s\le m-1.
\]
Let $s=1$. We have 
\[|(2\cdot \hat{X}_m+u_1)\setminus(2\cdot\hat{X}_m+u_m)|\ge 1\] 
and  
\[|(2\cdot X_m+A_1)\setminus(2\cdot X_m+A_m)|\ge |A_1|.\]
Now, let $2\le s\le m-1$ such that 
\[|(2\cdot X_m+(A_1\cup A_{2}\cup\ldots\cup A_{s-1}))\setminus(2\cdot X_m+A_m)|\ge |A_1|+|A_{2}|+\cdots+|A_{s-1}|.\]
We have
\[ |(2\cdot\hat{X}_m+\{u_1, \ldots, u_{s-1}\})\setminus(2\cdot \hat{X}_m+u_m)|\ge s-1
\]
and 
\[ |(2\cdot\hat{X}_m+\{u_1, \ldots, u_{s-1},u_{t}\})\setminus(2\cdot \hat{X}_m+u_m)|\ge s,
\]
so
\[|(2\cdot X_m+(A_1\cup A_{2}\cup\ldots\cup A_{s})\setminus(2\cdot X_m+A_m)|\ge |A_1|+|A_{2}|+\cdots+|A_{s}|.\]
We have
\[|(2\cdot X_m+(A_1\cup A_{2}\cup\ldots\cup A_{m-1})\setminus(2\cdot X_m+A_m)|\ge |A_1|+|A_{2}|+\cdots+|A_{m-1}|.\]
Hence,
\begin{align*} |\Delta_{mm}|&=|(2\cdot A_m+k\cdot A)\setminus (2\cdot A_m+k\cdot A_m)|\\
& = |(2\cdot X_m+A)\setminus (2\cdot X_m+X_m)|\\
& \ge |(2\cdot X_m+(A_1\cup A_{2}\cup\ldots\cup A_{m}))\setminus(2\cdot X_m+A_m)|\\
& \ge |A_1|+|A_{2}|+\cdots+|A_{m-1}|
\end{align*} 
and 
\begin{align*} |2\cdot A_m+k\cdot A| & \ge |2\cdot A_m+k\cdot A_m|+|\Delta_{mm}|\\
& \ge (t+1)|A_m|+|A_1|+|A_{2}|+\cdots+|A_{m-1}|-4k^{t-2}.
\end{align*} 
Using (\ref{last}), we obtain
\begin{align*} 
|2\cdot A+k\cdot A| & \ge  \sum_{i\in E}(t+1)|A_i|+\sum_{i\in E\setminus\{m\}}|\Delta_{ii}|+\sum_{i\in F}(t+2)|A_i| \\
&\quad+|A_1|+|A_{2}|+\cdots+|A_{m-1}|-(|E|4k^{t-2}+|F|2k)\\
&\ge (t+2)|A|-4k^{t-1}.
\end{align*}

Now, let us assume that $L\ne \emptyset$. Thus there exists $1\le l\le m-1$ such that $(u_l-u_m,k)\ne 1$. Since $p\mid u_l$ and $p\nmid u_m$, we obtain that $(u_l-u_m,k)=q$. Thus $|L|=1$. Since $|X_m|< p$, by Lemma \ref{l6}, we have $|\hat{X}_m+(u_l-u_m)\setminus\hat{X}_m|\ge 1$. Then, using Lemma \ref{C} and Lemma \ref{l8}, we obtain
\[ |2\cdot\hat{X}_m+\{0,u_1-u_m, \ldots, u_{s}-u_m\}|\ge |\hat{X}_m|+s-1,\textrm{ for all }1\le s\le m-1.
\]
Similarly as in the case $L= \emptyset$, we have
\[|(2\cdot X_m+(A_1\cup A_{2}\cup\ldots\cup A_{m})\setminus(2X_m+A_m)|\ge |A_1|+|A_{2}|+\cdots+|A_{m-1}|\]
and 
\[
|2\cdot A+k\cdot A| \ge (t+2)|A|-4k^{t-1}.
\]

\emph{Case 2d.} $p\le|\hat{X}_m|<q$. Let $(X_m)_q=\{x(\textrm{mod }q)\mid q\in X_m\}$. Then $|(X_m)_q|\le |\hat{X}_m|<q$. Moreover, $(u_i,q)=1$, for $2\le i\le m-1$ and $|\{u_i(\textrm{mod }q)\mid 2\le i\le m-1\}|=m-2$, so by Lemma \ref{C}
\[ |2\cdot(X_m)_q+\{u_1(\textrm{mod }q),u_2(\textrm{mod }q),\ldots,u_t(\textrm{mod }q)\}|\ge\min\{q, |(X_m)_q|+s-1\}\]
for all $1\le s\le m-1$. Similarly, as in the previous case, we have
\[|(X_m+(A_1\cup A_2\cup\cdots\cup A_m)\setminus (X_m+A_1)|\ge|A_2|+\cdots+|A_r|,\]
where $r=\min\{m-1,q+1-(X_m)_q\}$. We obtain
\begin{align*} |2\cdot X_m+A| & \ge |2\cdot X_m+A_1|+|(X_m+(A_1\cup A_2\cup\cdots\cup A_m)\setminus (X_m+A_1)|\\
& \ge c_2(A_1)|X_m|+|\hat{X}_m||A_1|+|A_2|+\cdots+|A_r|-2k.
\end{align*} 
We have two subcases: $|\Delta_{11}|\ge |A_1|$ or $|\Delta_{11}|<|A_1|$ and $c_2(A_1)=2$. In both subcases, using Lemma \ref{imp2}, we obtain

\begin{align*} 
|2\cdot A+k\cdot A| & = \sum_{i\in E\setminus\{m\}}|2\cdot A_i+k\cdot A|+|2\cdot A_m+k\cdot A|+\sum_{i\in F}|2\cdot A_i+k\cdot A|\\ 
& \ge  \sum_{i\in E\setminus\{m\}} |2\cdot A_i+k\cdot A_i|+\sum_{i\in E\setminus\{m\}}|\Delta_{ii}|+|2\cdot X_m+A|\\
&\quad+\sum_{i\in F,i\le m-1}|2\cdot A_i+k\cdot A_n|+\sum_{i\in F,i>m}|2\cdot A_i+k\cdot A_i|\\
& \ge  \sum_{i\in E\setminus\{m\}}((t+1)|A_i|-4k^{t-1})\\
&\quad+|\Delta_{11}| +\sum_{i\in E,2\le i\le m-1}q|A_m|+\sum_{i\in E,m+1\le i\le j}|A_m|\\
&\quad+\sum_{i\in F,i\le m-1} (2|A_i|+k|A_n|-2k)+\sum_{i\in F,i>m}((k+2)|A_i|-2k)\\
&\quad+c_2(A_1)|X_m|+|\hat{X}_m||A_1|+|A_2|+\cdots+|A_r|-2k\\
& \ge  \sum_{i\in E\setminus\{m\}}(t+1)|A_i| +\sum_{i\in E,2\le i\le m-1}q|A_m|+\sum_{i\in E,m+1\le i\le j}|A_m|\\
&\quad+\sum_{i\in F,i\le m-1} (2|A_i|+k|A_n|)+\sum_{i\in F,i>m}(k+2)|A_i|\\
&\quad+2|X_m|+|\hat{X}_m||A_1|+|A_2|+\cdots+|A_r|-2k\\
&\quad -((|E|-1)4k^{t-2}+(|F|+1)2k)\\
& \ge \sum_{i\in E\setminus\{m\}}(t+2)|A_i|+\sum_{i\in F}(t+2)|A_i|+2|A_m|+(|\hat{X}_m|-m+r)|A_1|\\
&\quad +(m-2)q|A_m|-((|E|-1)4k^{t-2}+(|F|+1)2k)\\
& \ge \sum_{i\in (E\cup F)\setminus\{m\}}(t+2)|A_i|+2|A_m|+(|\hat{X}_m|+r-2)q|A_m|\\
&\quad -((|E|-1)4k^{t-2}+(|F|+1)2k).\\
\end{align*}  
By the definition of $r$, we have
\begin{align*} |\hat{X}_m|+r-2 &\ge \min\{|\hat{X}_m|+m-3,|\hat{X}_m|+q-1-(X_m)_q\}\\
&\ge\min\{|\hat{X}_m|+m-3,q-1\}\ge p.
\end{align*} 
Thus
\[|2\cdot A+k\cdot A| \ge (t+2)|A|-4k^{t-1}.
\]

\end{proof}

\textbf{Proof of Theorem \ref{thm2}.}  If $j=k$, applying Corollary \ref{cor2}, we obtain $|2\cdot A+k\cdot A|\ge (k+2)|A|-2k\ge (k+2)|A|-k^2-k+2$. We assume $j< k$. Without loss of generality we also assume that $\gcd(A)=1$ and $0\in A_1$. We have $|A_1|\ge \frac{|A|}{j}> 8k^{k-1}$. Let $m=\min\{1\le i\le j\mid p\nmid u_i\}$. 

The proof in the case $E=\emptyset$ is the same as the proof of this case in Theorem \ref{thm1}. We assume $E\ne\emptyset$. 

If $|\Delta_{11}|\ge |A_1|$, we have
\begin{align*} 
|2\cdot A+k\cdot A| & \ge |2\cdot A_1+k\cdot A|+|2\cdot (A\setminus A_1)+k\cdot (A\setminus A_1)|\\
& = |2\cdot A_1+k\cdot A_1|+|\Delta_{11}|+(k+2)|A\setminus A_m|-4k^{k-1}\\
& \ge (k+2)|A_1|-4k^{k-1}+|A_1|+(k+2)|A\setminus A_m|-4k^{k-1}\\
& >(k+2)|A|.\\
\end{align*}  

We assume $|\Delta_{11}|< |A_1|$. Then by Lemma \ref{2-full}, we have $c_2(A_1)=2$. We consider following cases.

\emph{Case 1.} $(u_2,k)=1$

Let $2\in F$. Since $E\ne\emptyset$, there exists $s\in E$. By Lemma \ref{imp2}, we have $|\Delta_{ss}|\ge |A_2|$. We denote $A'=A\setminus (A_2\cup A_s)$. We obtain
\begin{align*} 
|2\cdot A+k\cdot A| & \ge |2\cdot A_2+k\cdot A|+|2\cdot A_s+k\cdot A|+|2\cdot A'+k\cdot A'|\\
& \ge |2\cdot A_2+k\cdot A_1|+|2\cdot A_s+k\cdot A_s|+|\Delta_{ss}|+(k+2)|A'|-4k^{k-1}\\
& \ge 2|A_2|+k|A_1|-2k+(k+2)|A_s|-4k^{k-1}+|A_2|+(k+2)|A'|-4k^{k-1}\\
& \ge (k+2)|A|+|A_1|-8k^{k-1}-2k\\
& > (k+2)|A|-2k
\end{align*} 
If $2\in E$, then by Lemma \ref{imp2}, we have $|\Delta_{22}|\ge |A_1|$. Thus
\begin{align*} 
|2\cdot A+k\cdot A| & \ge |2\cdot A_2+k\cdot A|+|2\cdot (A\setminus A_2)+k\cdot (A\setminus A_2)|\\
& = |2\cdot A_2+k\cdot A_2|+|\Delta_{22}|+(k+2)|A\setminus A_2|-4k^{k-1}\\
& \ge (k+2)|A_2|-4k^{k-1}+|A_1|+(k+2)|A\setminus A_2|-4k^{k-1}\\
& \ge (k+2)|A|+|A_1|-8k^{k-1}\\
& >(k+2)|A|.\\
\end{align*}

\emph{Case 2.} $(u_2,k)=p$. We consider the following subcases.

\emph{Case 2a.} $m\in F$.  Since $E\ne\emptyset$, there exists $s\in E$. By Lemma \ref{imp2}, we have $|\Delta_{ss}|\ge |A_m|$. We denote $A'=A\setminus (A_m\cup A_s)$. We obtain
\begin{align*} 
|2\cdot A+k\cdot A| & \ge |2\cdot A_m+k\cdot A|+|2\cdot A_s+k\cdot A|+|2\cdot A'+k\cdot A'|\\
& \ge |2\cdot A_m+k\cdot A_1|+|2\cdot A_s+k\cdot A_s|+|\Delta_{ss}|+(k+2)|A'|-4k^{k-1}\\
& \ge 2|A_m|+k|A_1|-2k+(k+2)|A_s|-4k^{k-1}+|A_m|+(k+2)|A'|-4k^{k-1}\\
& \ge (k+2)|A|+|A_1|-8k^{k-1}-2k\\
& > (k+2)|A|-2k
\end{align*} 

\emph{Case 2b.} $m\in E$. Here we will consider separate cases when $|\hat{X}_m|\ge p$ and $|\hat{X}_m|< p$. Moreover, the case $|\hat{X}_m|\ge p$ we will divide in two subcases: $|A_1|\le q|A_m|$ and $|A_1|> q|A_m|$.

First we assume that $|\hat{X}_m|\ge p$ and $|A_1|\le q|A_m|$. 

Let $2\in F$. By Lemma \ref{imp2}, we have $|\Delta_{mm}|\ge |A_2|$. We denote $A'=A\setminus (A_2\cup A_m)$. We obtain
\begin{align*} 
|2\cdot A+k\cdot A| & \ge |2\cdot A_2+k\cdot A|+|2\cdot A_m+k\cdot A|+|2\cdot A'+k\cdot A'|\\
& \ge |2\cdot A_2+k\cdot A_1|+|2\cdot A_m+k\cdot A_m|+|\Delta_{mm}|+(k+2)|A'|-4k^{k-1}\\
& \ge 2|A_2|+k|A_1|-2k+(k+2)|A_m|-4k^{k-1}+|A_2|+(k+2)|A'|-4k^{k-1}\\
& \ge (k+2)|A|+|A_1|-8k^{k-1}-2k\\
& > (k+2)|A|-2k
\end{align*} 

If $2\in E$, by Lemma \ref{imp2}, we have $|\Delta_{22}|\ge |A_1|$. Thus
\begin{align*} 
|2\cdot A+k\cdot A| & \ge |2\cdot A_2+k\cdot A|+|2\cdot (A\setminus A_2)+k\cdot (A\setminus A_2)|\\
& = |2\cdot A_2+k\cdot A_2|+|\Delta_{22}|+(k+2)|A\setminus A_2|-4k^{k-1}\\
& \ge (k+2)|A_2|-4k^{k-1}+|A_1|+(k+2)|A\setminus A_2|-4k^{k-1}\\
& \ge (k+2)|A|+|A_1|-8k^{k-1}\\
& >(k+2)|A|.\\
\end{align*}  

Next we assume that $|\hat{X}_m|\ge p$ and $|A_1|> q|A_m|$. By Corollary \ref{cor2}, we have 
\[|2\cdot X_m+A| \ge |2\cdot X_m+A_1|\ge 2|X_m|+|\hat{X}_m||X_1|-2k.\] 
If $|\hat{X}_m|> p$, we obtain 
\begin{equation}\label{i}
|2\cdot X_m+A| \ge |2\cdot X_m+A_1|\ge 2|A_m|+(p+1)|A_1|-2k.
\end{equation}
If $|\hat{X}_m|=p$,  by Lemma \ref{l6}, we have $|(2\cdot \hat{X}_m+u_2)\setminus(2\cdot \hat{X}_m+u_1)|\ge 1$ and
\[
|(2\cdot X_m+A_2)\setminus(2\cdot X_m+A_1)|\ge |A_2||(2\cdot \hat{X}_m+u_2)\setminus(2\cdot \hat{X}_m+u_1)|\ge|A_2|.
\]
Thus
\begin{align}\label{ii}
|2\cdot X_m+A| &\ge|2\cdot X_m+A_1|+|(2\cdot X_m+A_2)\setminus(2\cdot X_m+A_1)|\\
&\ge|2\cdot X_m+A_1|+|A_2|\ge 2|A_m|+p|A_1|+|A_2|-2k.\notag
\end{align}

Now, let $2\in F$. We denote $A'=A\setminus (A_2\cup A_m)$. We have
\begin{align*} 
|2\cdot A+k\cdot A| & \ge |2\cdot A_2+k\cdot A|+|2\cdot A_m+k\cdot A|+|2\cdot A'+k\cdot A'|\\
& \ge |2\cdot A_2+k\cdot A_1|+|2\cdot X_m+A|+(k+2)|A'|-4k^{k-1}\\
& \ge 2|A_2|+k|A_1|-2k+(k+2)|A_m|+|A_2|-2k+(k+2)|A'|-4k^{k-1}\\
& \ge (k+2)|A|+|A_1|-8k^{k-1}-2k\\
& > (k+2)|A|-2k
\end{align*} 

If $2\in E$, by Lemma \ref{imp2}, we have $|\Delta_{22}|\ge q|A_m|$. Thus, if $A'=A\setminus (A_2\cup A_m)$, then
\begin{align*} 
|2\cdot A+k\cdot A| & \ge |2\cdot A_2+k\cdot A|+|2\cdot A_m+k\cdot A|+|2\cdot A'+k\cdot A'|\\
& \ge |2\cdot A_2+k\cdot A_2|+|\Delta_{22}|+|2\cdot X_m+A|+(k+2)|A'|-4k^{k-1}\\
& \ge(k+2)|A_2|-4k^{k-1}+q|A_m|+2|A_m|+p|A_1|+|A_2|-2k\\
&\quad +(k+2)|A'|-4k^{k-1}\\
& \ge(k+2)|A_2|+(k+2)|A'|+q|A_m|+2|A_m|+(p-1)q|A_m|+|A_1|\\
&\quad -8k^{k-1}-2k\\
& \ge (k+2)|A|+|A_1|-8k^{k-1}-2k\\
& > (k+2)|A|-2k
\end{align*} 

Finally, we assume that $|\hat{X}_m|<p$. By Lemma \ref{l6}, we have $|(2\cdot \hat{X}_m+u_1)\setminus(2\cdot \hat{X}_m+u_m)|\ge 1$ and
\[
|\Delta_{mm}|\ge|(2\cdot X_m+A_1)\setminus(2\cdot X_m+A_m)|\ge |A_1||(2\cdot \hat{X}_m+u_2)\setminus(2\cdot \hat{X}_m+u_1)|\ge|A_1|.
\]
Thus
\begin{align*} 
|2\cdot A+k\cdot A| & \ge |2\cdot A_m+k\cdot A|+|2\cdot (A\setminus A_m)+k\cdot (A\setminus A_m)|\\
& = |2\cdot A_m+k\cdot A_m|+|\Delta_{mm}|+(k+2)|A\setminus A_m|-4k^{k-1}\\
& \ge (k+2)|A_m|-4k^{k-1}+|A_1|+(k+2)|A\setminus A_m|-4k^{k-1}\\
& \ge (k+2)|A|+|A_1|-8k^{k-1}\\
& >(k+2)|A|.\\
\end{align*}

This ends the proof.

\bibliographystyle{amsplain}

\end{document}